\documentclass[final,onefignum,onetabnum]{siamart171218}

\usepackage{amsmath}
\usepackage{amssymb}
\usepackage{mathtools}
\usepackage{enumitem}
\usepackage{etoolbox}
\usepackage{ulem}
\usepackage{subcaption}
\usepackage[colorinlistoftodos]{todonotes}
\reversemarginpar

\usepackage{MnSymbol,graphicx}

\newtheorem{thm}{Theorem}[section]

\newtheorem{prop}{Proposition}[section]
\newtheorem{defn}{Definition}[section]

\newtheorem{assumption}{Assumptions}[section]
\newtheorem{rem}{Remark}[section]

% SIAM Shared Information Template
% This is information that is shared between the main document and any
% supplement. If no supplement is required, then this information can
% be included directly in the main document.

% Packages and macros go here
\usepackage{lipsum}
\usepackage{amsfonts}
\usepackage{graphicx}
\usepackage{epstopdf}
\usepackage{algorithmic}
\ifpdf
  \DeclareGraphicsExtensions{.eps,.pdf,.png,.jpg}
\else
  \DeclareGraphicsExtensions{.eps}
\fi

% Add a serial/Oxford comma by default.

% Used for creating new theorem and remark environments
\newsiamremark{remark}{Remark}
\newsiamremark{hypothesis}{Hypothesis}
\crefname{hypothesis}{Hypothesis}{Hypotheses}
\newsiamthm{claim}{Claim}

% Sets running headers as well as PDF title and authors
\headers{Regularization on compact sets}{Barbara Palumbo, Paolo Massa, and Federico Benvenuto}

% Title. If the supplement option is on, then "Supplementary Material"
% is automatically inserted before the title.
%\title{Tikhonov regularization with neural networks: convergence rates \thanks{Submitted to the editors DATE.
%\funding{}}}
%\title{Tikhonov regularization with neural networks: convergence rates}
\title{Convergence rates for Tikhonov regularization on compact sets: application to neural networks}

% Authors: full names plus addresses.
\author{Barbara Palumbo\thanks{MIDA, Dipartimento di Matematica, Università di Genova, via Dodecaneso 35, I-16146 Genova,
Italy
(\email{barbara.palumbo@edu.unige.it}, \email{federico.benvenuto@unige.it})}
\and Paolo Massa\thanks{University of Applied Sciences and Arts Northwest Switzerland (FHNW), School of Computer Science, Bahnhofstrasse 6, Windisch 5210, Switzerland 
  (\email{paolo.massa@fhnw.ch}).}
\and Federico Benvenuto\footnotemark[1]}

\usepackage{amsopn}

%% Added on Overleaf: enabling xr
\makeatletter
\newcommand*{\addFileDependency}[1]{% argument=file name and extension
  \typeout{(#1)}% latexmk will find this if $recorder=0 (however, in that case, it will ignore #1 if it is a .aux or .pdf file etc and it exists! if it doesn't exist, it will appear in the list of dependents regardless)
  \@addtofilelist{#1}% if you want it to appear in \listfiles, not really necessary and latexmk doesn't use this
  \IfFileExists{#1}{}{\typeout{No file #1.}}% latexmk will find this message if #1 doesn't exist (yet)
}
\makeatother

\newcommand*{\myexternaldocument}[1]{%
    \externaldocument{#1}%
    \addFileDependency{#1.tex}%
    \addFileDependency{#1.aux}%
}
%%% END HELPER CODE
%%% Local Variables: 
%%% mode:latex
%%% TeX-master: "ex_article"
%%% End: 

\ifpdf
\hypersetup{
  pdftitle={Tikhonov and neural networks},
  pdfauthor={Palumbo B.}
}
\fi

\myexternaldocument{ex_supplement}

\begin{document}

\maketitle

\begin{abstract}
In this work, we consider ill-posed inverse problems in which the forward operator is continuous and weakly closed, and the sought solution belongs to a weakly closed constraint set. 
We propose a regularization method
based on minimizing the Tikhonov functional on a sequence of compact sets which is dense in the intersection between the domain of the forward operator and the constraint set.
The index of the compact sets can be interpreted as an additional regularization parameter.
We prove that the proposed method is a regularization, achieving the same convergence rates as classical Tikhonov regularization and attaining the optimal convergence rate when the forward operator is linear.
Moreover, we show that our methodology applies to the case where the constrained solution space is parametrized by means of  neural networks (NNs), and the constraint is obtained by composing the last layer of the NN with a suitable activation function.
In this case the dense compact sets are defined by taking a family of bounded weight NNs with increasing weight bound.
Finally, we present some numerical experiments in the case of Computerized Tomography to compare the theoretical behavior of the reconstruction error with that obtained in a finite dimensional and non-asymptotic setting.
The numerical tests also show that our NN-based regularization method is able to provide piece-wise constant solutions and to preserve the sharpness of edges, thus achieving lower reconstruction errors compared to the classical Tikhonov approach for the same level of noise in the data.
\end{abstract}

\begin{keywords}
  ill-posed inverse problems, regularization theory, convergence rates, deep neural networks

\end{keywords}

\begin{AMS}
65J20, 65J22, 65K10
\end{AMS}

\section{Introduction}
Regularization theory \cite{engl1996regularization} provides a framework to overcome the ill-posedness of an inverse problem by introducing a family of operators that approximate the inverse of the forward operator in a stable way. 
Each operator in the family is indexed by a \textit{regularization parameter}, which controls the trade-off between stability and accuracy. 
The most widely adopted regularization method is the Tikhonov strategy \cite{tikhonov1977solutions}, which defines the regularized solution as the minimizer of an objective functional. 
Depending on the specific problem, the Tikhonov method can be adapted to different noise models or incorporate additional prior knowledge.
These include different data misfits, such as the $L^1$ norm or the Kullback–Leibler divergence, and various penalty functionals.
A central question in this context is the study of \textit{convergence rates}, which quantify the speed at which the regularized solution converges to the exact solution as a function of the noise level, under smoothness assumptions on the true solution. 
Convergence rates for the Tikhonov method and its extensions have been thoroughly established; see, e.g., \cite{Hohage_2016,hohage2014convergence,resmerita2007joint,mair1996statistical,benning2018modern, bissantz2007convergence, kaltenbacher2024convergence, ramlau2010convergence, kindermann2016convex, kindermann2011convergence, ramlau2006tikhonov, jin2009elastic}.
Another approach to define a family of regularization operators is to restrict the domain of the forward operator, thereby limiting the space of admissible solutions. A notable example of this approach is the Ritz regularization method which defines a dense sequence of finite-dimensional vector subspaces, and finds a Tikhonov-regularized solution within each subspace. 
Convergence rates for this method are the same as those of classical Tikhonov regularization, provided that the approximating subspaces become dense at a sufficiently fast rate \cite{groetsch1984theory}. An alternative strategy to obtain stable solutions to ill-posed inverse problems is to restrict the domain of the forward operator to a compact subset of the solution space. This approach is known as the selection method \cite{tikhonov1977solutions}. While this method do not define a regularization in the classical sense, since it does not introduce a family of approximating operators, it still ensures stability under certain conditions. 
In particular, if the forward operator is injective, restricting it to a compact domain yields a well-posed problem as the inverse operators is continuous.

In this work, we consider a possibly non-linear inverse problem with a continuous and weakly closed forward operator. Unlike in previous studies, we assume that the sought solution belongs to a weakly closed constraint set. 
To provide a family of regularized solutions in this general case, we define a dense sequence of compact sets and we determine the Tikhonov solution within each set.
This construction blends two distinct philosophies: on the one hand, it reflects the idea of the Ritz method of approximating the solution within a dense sequence of sets; on the other hand, by selecting compact sets, it aligns with the strategy of the selection method. 
Doing so, we reinterpret the selection method as a genuine regularization strategy.
The main advantage of this reinterpretation is that the regularized solution can be defined as a neural network (NN) \cite{Goodfellow2016}, since the sequence of compact sets can be realized by neural networks with increasing bounded weights and  number of parameters (e.g., the number of neurons), thanks to their universal approximation property which guarantees the ability to approximate the exact solution to any desired level of accuracy \cite{leshno1993multilayer}.
This leads to the additional benefit that in practice a regularized solution can be easily computed by optimizing the NN weights with widely-adopted gradient descent techniques \cite{kingma2017adammethodstochasticoptimization}.
Moreover, the use of NNs allows imposing constraints (e.g., non-negativity) on the inverse problems solution by applying an appropriate activation function at the last layer. 
This use of NNs substantially differs from the typical machine learning approaches adopted in the context of inverse problems. 
In this regard, machine learning has been used primarily for three purposes: (i) selecting or learning the regularization parameter, (ii) approximating the inverse operator, and (iii) designing data-adaptive regularization functionals or learned priors. 
In these settings as well, the literature offers an analysis of convergence rates; see, for example, \cite{Arridge_Maass_Öktem_Schönlieb_2019, ongie2020deep, li2020nett, afkham2021learning, duff2024regularising, guastavino2020convergence}. 
However,  our regularized solution can be viewed as a form of learned positional encoding, where spatial points are embedded via a NN, similarly to recent approaches \cite{vaswani2017attention}. 
A particularly relevant example of this technique is Neural Radiance Fields (NeRF), which was recently proposed and has been extensively applied to the problem of scene reconstruction and novel view synthesis from a limited number of viewpoints
\cite{mildenhall2021nerf, barron2021mip, wang2021neus,kerbl20233d}.

We prove that our proposed strategy is a regularization method based on two parameters: in addition to the classical Tikhonov regularization parameter, the second parameter corresponds to the index of the compact sets.
Our proofs follow the analysis of non-linear inverse problems presented in Chapter 10 of \cite{engl1996regularization}.
We obtain convergence rates for classical source conditions on solutions belonging to a weakly closed constraint set, in line with those for regularization methods under non-negativity constraints \cite{clason2019regularization},  or closed and convex constraints \cite{Chavent_1994}.
Analogously to the Ritz regularization method, our convergence rates depend on the rate at which the sequence of sets becomes dense within the solution space. 
We also prove that, when applied to linear operators, the proposed regularization method achieves the optimal convergence rate.
As far as we know, convergence rates for this method has been studied only in the case of  denoising, i.e., when the forward operator is the identity
\cite{burger2000training}.
In this case,  convergence rates have been established when the penalty term is either the Sobolev norm of the solution or the classical $\ell^2$-norm applied to the weights in the last layer.
Therefore, theoretical contributions of this paper on convergence rates can be viewed as a extension of the results obtained in \cite{burger2000training}  when the forward operator is weakly closed and the solution belongs to a weakly closed set.
Finally, we perform numerical experiments in the case of Computerized Tomography to test the proposed regularization method in a finite-dimensional and non-asymptotic setting.
Reconstruction errors are computed for both our method and the classical Tikhonov regularization, and the experimental results are compared with theoretical estimates.
We show that the NN-based regularization method, which includes the non-negativity constraint, provides lower reconstruction errors compared to the Tikhonov approach for the same noise level affecting the data.

The reminder of the paper is organized as follows. 
In section \ref{sec:main} we present the proposed approach for possibly nonlinear constrained inverse problems and prove that it is a regularization method; moreover we provide convergence rates for our technique and, in the case of linear operators, we show how analogous theoretical results, together with optimal convergence rates, can be obtained under milder assumptions.
In section \ref{sec:neural_networks} we show how to apply this method to the case of compact sets defined by NNs with bounded weights.
Section \ref{sec:numexp} is dedicated to the numerical experiments for testing theoretical results in a non asymptotic regime, while section \ref{sec:conclusion} contains our conclusions.

\section{Mathematical formulation} \label{sec:main}

In this paper, we address an ill-posed problem that is not necessarily linear. 
We consider $\mathcal X$ and $\mathcal Y$  Hilbert spaces and an operator $F \colon {\mathcal X} \to {\mathcal Y}$ with domain $D(F) \subseteq \mathcal{X}$.
Given $y \in {\mathcal Y}$, our goal is to solve
\begin{equation}
\label{eq: F(x)=y}
F(x) = y
\end{equation}
when only a noisy version of the data $y^\delta \in {\mathcal Y}$ such that $\|y^\delta-y\|\le\delta$ is available.
In addition, we impose a constraint on the solution, i.e.,  we require the solution to belong to a weakly closed subset $C\subseteq D(F)$. 
To obtain a stable approximation of the true solution from noisy data, it is necessary to regularize problem \eqref{eq: F(x)=y}.

\subsection{Definitions and assumptions}
In this section, we provide the main definitions and assumptions related to the problem under investigation.
While we consider a setting similar to that of Chapter 10 in \cite{engl1996regularization}, our presentation differs in that we study a constrained problem with a weakly closed constraint.

\begin{assumption}
The following assumptions are valid throughout the paper.
\begin{enumerate}
\item[A1.] $F$ is continuous.
\item[A2.] $F$ is weakly closed\footnote{An operator $F$ is {\it weakly closed} if, for every sequence ${x_n}$ such that $x_n \rightharpoonup x$ as $n \to \infty$ in $\mathcal{X}$ and $F(x_n) \rightharpoonup y$, it follows that $x\in D(F)$ and $F(x) = y$.}.
\item[A3.] The solution $x$ of the problem \eqref{eq: F(x)=y} belongs to a weakly closed subset $C \subseteq \mathcal{X}$, i.e., $x \in C \cap D(F)$ and $F(x)=y$. 
\end{enumerate}
\end{assumption}
We note that assumption A2 is satisfied by a continuous and compact operator $F$ with weakly closed domain $D(F)$.
Further, a sufficient condition for $C$ to be weakly closed is to be closed and convex.

The classic approach (e.g., \cite{engl1996regularization}) consists in restricting the solution space to a subset of elements that are close to a reference element $x^*\in{\mathcal X}$ encoding {\it a-priori} information.
 
\begin{defn}
Any element $x^\dagger \in \mathcal{X}$ such that 
\begin{eqnarray}
\label{def: x dagger}
\|x^\dagger - x^\ast \| = \min \{\|z - x^\ast \| \; | \; F(z)=y, ~z \in C \cap D(F)\}
\end{eqnarray}
is called $x^\ast$-minimum norm solution.
\end{defn}

The following proposition holds true.
\begin{prop}
\label{prop: esistenza di x dagger}
Under assumptions $A2$ and $A3$, there exists an $x^*$-minimum norm solution for problem \eqref{eq: F(x)=y}. 

\end{prop}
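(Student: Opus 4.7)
The plan is a standard weak--compactness argument combined with the weak lower semicontinuity of the Hilbert space norm. Define the feasible set
\[
M := \{z \in C \cap D(F) \; | \; F(z) = y\}
\]
and the infimum $m := \inf_{z \in M} \|z - x^\ast\|$. By assumption A3 the set $M$ is non--empty, hence $m < +\infty$. Pick a minimizing sequence $\{z_n\} \subseteq M$ such that $\|z_n - x^\ast\| \to m$. In particular $\{z_n\}$ is norm--bounded in $\mathcal{X}$.

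Since $\mathcal{X}$ is a Hilbert space and hence reflexive, by the Banach--Alaoglu theorem a subsequence (still denoted $\{z_n\}$) converges weakly, say $z_n \rightharpoonup \tilde{x}$. To identify $\tilde x$ as an element of $M$, I would invoke the two weak--closedness hypotheses: first, $C$ is weakly closed by A3, so $\tilde{x} \in C$; second, $F(z_n) = y$ for all $n$ trivially converges weakly to $y$, so by assumption A2 we get $\tilde{x} \in D(F)$ and $F(\tilde{x}) = y$. Therefore $\tilde{x} \in M$.

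It remains to check that $\tilde{x}$ is a minimizer of \eqref{def: x dagger}. Since $z_n - x^\ast \rightharpoonup \tilde{x} - x^\ast$ and the Hilbert norm is weakly lower semicontinuous,
\[
\|\tilde{x} - x^\ast\| \le \liminf_{n \to \infty} \|z_n - x^\ast\| = m.
\]
On the other hand, $\tilde{x} \in M$ forces $\|\tilde{x} - x^\ast\| \ge m$, whence equality holds and $\tilde{x}$ is an $x^\ast$--minimum norm solution.

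There is no real obstacle here: everything reduces to combining reflexivity, weak lower semicontinuity of the norm, and the two weak--closedness assumptions A2 and A3. The only points requiring some care are making sure that the sequence $F(z_n) = y$ is treated as a (trivially) weakly convergent sequence so that A2 can be applied, and that convexity is \emph{not} needed for $C$ since weak closedness is assumed directly.
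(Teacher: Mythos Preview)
Your proof is correct and follows essentially the same route as the paper: take a minimizing sequence in the feasible set, extract a weakly convergent subsequence by boundedness and reflexivity, use weak closedness of $C$ (A3) and weak closedness of $F$ (A2) with the constant sequence $F(z_n)=y$ to place the weak limit back in the feasible set, and conclude via weak lower semicontinuity of the norm. The paper's argument is identical in structure, with only cosmetic differences in notation.
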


\begin{proof} 
Assumption $A3$ ensures that the set $\{ x \in C \cap D(F) : F(x)=y\}$ is not empty.
We will only consider the case where the set consists of an infinite number of elements, otherwise the proof is trivial.

We define a sequence $\{x_n\}_{n \in \mathbb N} \subseteq C  \cap D(F)$ such that $F(x_n)=y$ for every $n \in {\mathbb N}$ and 
\begin{equation}
\lim_{n \to \infty} \| x_{n} - x^* \| = \inf \{\| z - x^* \| \; | \; F(z)=y, ~z \in C \cap D(F) \} ~.
\end{equation}
By using the triangle inequality $| \|x_{n} \| - \| x^* \| | \le \| x_{n} - x^* \|$ and by observing that the sequence $\{\| x_n - x^* \|\}_{n \in \mathbb N}$ is bounded, we obtain that $\{ \| x_{n} \| \}_{n \in \mathbb N}$ is bounded.
Therefore, there exists a subsequence $\{x_{n(j)}\}_{j \in \mathbb N}$ and an element $w \in \mathcal{X}$ such that $x_{n(j)} \rightharpoonup w$ as $j \to \infty$.
To simplify the notation, we denote this subsequence with $\{x_j\}_{j \in \mathbb N } $.
Since $C$ is weakly closed (Assumption A3), we have  that $w \in C$. 
Furthermore, it follows from Assumption A2 that $w \in D(F) \cap C$ and $F(w)=y$. 
Finally, due to the weak lower-semicontinuity of the norm we obtain
\begin{equation}
\| w - x^* \| \le \liminf_{n \to \infty} \| x_{n} - x^* \| = \inf \{\| z - x^* \| \; | \; F(z)=y, ~z \in C \cap D(F) \} ~.
\end{equation}
We conclude that $w$ is $x^*$-minimum norm solution for \eqref{eq: F(x)=y}. 
\end{proof}
We note that an $x^*$-minimum norm solution for \eqref{eq: F(x)=y} is not unique in general.

\subsection{Tikhonov regularization on compact sets}

\label{sec:Tikhonov-regularization}
In this section, we propose a method to regularize problem \eqref{eq: F(x)=y} subject to a weakly closed constraint by using a sequence of compact sets.
Analogously to the regularization by projection on finite dimensional subspaces (see, e.g., section 3.3 in \cite{engl1996regularization}), the use of a sequence of compact sets stabilizes the inverse operator when it is injective, but convergence is not guaranteed. 
Therefore, it is convenient to use an $L^2$ norm regularization and determine a suitable parameter choice rule involving both the strength of $L^2$  regularization and the index of the compact sets.  

In the following we will consider a sequence of compact sets $\{\mathcal X_m\}_{m\in\mathbb{N}}$ such that 
\begin{equation}
\label{prop: proprietà}
\mathcal X_m \subseteq C \cap D(F) \subseteq \overline{\bigcup_{m=1}^\infty \mathcal{X}_m}~.
\end{equation}
\begin{defn}
\label{def:tik_sol_compact}
A regularized solution of the problem \eqref{eq: F(x)=y} in $\mathcal X_m$ is an element
\begin{equation}
\label{def: x m alpha delta}
x_{m, \alpha}^\delta \in \arg \min_{z \in \mathcal X_m} \| F(z) - y^\delta \|^2 + \alpha \| z- x^ \ast \|^2 ~,
\end{equation}
where $\alpha>0$ is the regularization parameter.
\end{defn}
The existence of such solutions is guaranteed by Weierstrass Theorem as the Tikhonov functional is continuous and $\mathcal X_m$ is compact.
However, without further assumptions on the compact set $\mathcal X_m$, the uniqueness of the solution is not guaranteed.

We now show that the solution of the problem \eqref{def: x m alpha delta} continuously depends on the data. Since this result holds for any compact set, we denote it with $\mathcal K$ instead of $\mathcal{X}_m$.
\begin{thm}
\label{thm: stabilità Tikhonov in xm}
Under hypophyses $A1$, $A2$, $A3$, let $\mathcal K \subseteq D(F)$ be a compact set and $\{y^j\}_{j\in\mathbb{N}}$ a sequence in $\mathcal{Y}$ such that $y^j \to y$.
We consider
\begin{equation}\label{eq:definition problem 1}
x_\alpha^j \in \arg\min_{z\in\mathcal{K}} \|F(z)- y^j \|^2 + \alpha \| z- x^\ast \|^2 ~.
\end{equation}
Then, there exists a subsequence of $\{x_\alpha^j\}_{j\in\mathbb{N}}$ converging to an element $w\in\mathcal{K}$ such that
\begin{equation}
\label{eq:definition problem}
w \in \arg\min_{z\in\mathcal{K}} \|F(z)- y \|^2 + \alpha \| z- x^\ast \|^2~.
\end{equation}
Furthermore, the limit of every convergent subsequence of $\{x_\alpha^j\}_{j\in\mathbb{N}}$ is a minimizer of the Tikhonov functional in \eqref{eq:definition problem}.
\end{thm}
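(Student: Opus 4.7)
The plan is to extract a convergent subsequence from $\{x_\alpha^j\}_{j\in\mathbb{N}}$ by invoking compactness of $\mathcal{K}$, then pass to the limit in the minimality inequality satisfied by each $x_\alpha^j$, exploiting continuity of $F$ (Assumption A1) and of the norm. Concretely, since $\{x_\alpha^j\}_{j\in\mathbb{N}} \subseteq \mathcal{K}$ and $\mathcal{K}$ is compact, there is a subsequence (still denoted $\{x_\alpha^j\}$ to simplify notation) and some $w \in \mathcal{K}$ with $x_\alpha^j \to w$ strongly in $\mathcal{X}$. By A1, $F(x_\alpha^j) \to F(w)$ in $\mathcal{Y}$; combined with $y^j \to y$, this yields $\|F(x_\alpha^j) - y^j\| \to \|F(w) - y\|$, and of course $\|x_\alpha^j - x^*\| \to \|w - x^*\|$.

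Next I would use the defining inequality: for every $z \in \mathcal{K}$,
\begin{equation*}
\|F(x_\alpha^j) - y^j\|^2 + \alpha \|x_\alpha^j - x^*\|^2 \le \|F(z) - y^j\|^2 + \alpha \|z - x^*\|^2 .
\end{equation*}
The left-hand side converges to $\|F(w) - y\|^2 + \alpha \|w - x^*\|^2$, while the right-hand side converges to $\|F(z) - y\|^2 + \alpha \|z - x^*\|^2$ since $y^j \to y$. Passing to the limit in $j$ gives that $w$ minimizes the Tikhonov functional with data $y$ on $\mathcal{K}$, proving \eqref{eq:definition problem}.

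For the last assertion, the argument above applies verbatim to \emph{any} convergent subsequence of $\{x_\alpha^j\}_{j\in\mathbb{N}}$: its limit lies in $\mathcal{K}$ by closedness of the compact set, and the same passage to the limit in the minimality inequality forces that limit to be a minimizer of the Tikhonov functional with data $y$.

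I do not expect any serious obstacle here. The setting is strong (compactness of $\mathcal{K}$ gives \emph{strong} subsequential convergence for free, so we never need weak-closedness of $C$ or A2, and the norm is continuous rather than merely weakly lower semicontinuous). The only mild care needed is to use the correct expansion to justify $\|F(x_\alpha^j) - y^j\|^2 \to \|F(w) - y\|^2$, which is immediate from $F(x_\alpha^j) \to F(w)$ and $y^j \to y$ via the reverse triangle inequality (or by expanding the inner product).
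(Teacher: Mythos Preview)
Your proof is correct and, in fact, more elementary than the paper's. The paper follows the standard template for Tikhonov stability in the nonlinear setting (cf.\ Engl--Hanke--Neubauer): it first bounds $\{x_\alpha^j\}$ and $\{F(x_\alpha^j)\}$, extracts a \emph{weakly} convergent subsequence, invokes the weak closedness of $F$ (A2) to identify $F(w)$ as the weak limit, uses only weak lower semicontinuity of the norm in a $\liminf/\limsup$ sandwich to obtain minimality of $w$, and then must run a separate contradiction argument to upgrade weak to strong convergence. You instead exploit directly that $\mathcal{K}$ is \emph{strongly} compact: this gives strong subsequential convergence for free, continuity of $F$ (A1) then gives strong convergence of the images, and every term in the functional converges rather than merely lower-semiconverges, so the passage to the limit is a one-liner. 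Your observation that A2 and A3 are not needed is accurate. The paper's machinery would be required if $\mathcal{K}$ were only weakly compact (e.g., bounded closed convex), but for the statement as written your route is both shorter and uses fewer hypotheses.
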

\begin{proof}
We note that the set $\mathcal{K}$ is weakly compact and weakly closed.
By definition we have
\begin{equation}
\|F(x_\alpha^j) - y^j \|^2 + \alpha \|x_\alpha^j - x^* \|^2 \le \|F(z)- y^j \|^2 + \alpha \|z - x^* \|^2
\end{equation}
for every $z \in \mathcal K$. 
Then, by using the reverse triangle inequality, we obtain
\begin{equation} 
\left| \| F(x_{\alpha}^j) \| - \| y^j \|  \right|^2 + \alpha | \| x_{\alpha}^j\| - \|x^* \| |^2 \le \| F(z) - y^j \|^2 + \alpha \|z - x^* \|
\end{equation}
for every $z \in \mathcal K$.
Since the sequence $\{y^j\}_{j \in \mathbb N}$ converges, the sequences $\{\|F(x_{ \alpha}^j)\|\}_{j \in \mathbb N}$ and $ \{\|x_{ \alpha}^j\|\}_{j \in \mathbb N}$ are bounded.
From this result and the fact that $F$ is weakly closed, it follows that there exist a subsequence $\{x_{\alpha}^{i}\}_{i \in \mathbb N}$ and an element $w \in D(F) \cap C$ such that $x_{\alpha}^{i} \rightharpoonup w$ and $F(x_{\alpha}^{i}) \rightharpoonup F(w)$.
Moreover, $w \in \mathcal K$ as $\mathcal K$ is a weakly closed set.
Thanks to the weak lower semicontinuity of the norm we obtain
\begin{eqnarray}
\label{eq: inferiore semicontinuità}
\| w- x^* \| \le \liminf_{i \to \infty} \| x_{\alpha}^{i} - x^* \| & \text{and} & \|F(w)- y \| \le \liminf_{i \to \infty} \| F(x_{ \alpha}^{i}) - y^{i} \| ~.
\end{eqnarray}
Then, thanks to the sub-additivity of $\liminf$, for every $z \in \mathcal{K}$ we have
\begin{eqnarray*}
\| F(w) - y \|^2 + \alpha \| w - x^*\|^2 & \le & \liminf_{i \to \infty} \| F(x_{\alpha}^{i}) - y^{i} \|^2 + \alpha \|  x_{\alpha}^{i} - x^* \|^2 \\
&\le& \limsup_{i \to \infty} \| F(x_{\alpha}^{i}) - y^{i} \|^2 + \alpha \|  x_{\alpha}^{i} - x^* \|^2 \\
&\le& \lim_{i \to \infty} \| F(z) - y^{i} \|^2 + \alpha \| z- x^*\|^2 \\
&= & \| F(z) - y\|^2 + \alpha \| z- x^*\|^2 ~.
\end{eqnarray*}
By taking $z = w$, the inequalities become equalities and
\begin{equation}
\label{eq: Tikhonov convegrence}
\lim_{i \to \infty} \| F(x_{\alpha}^{i}) - y^{i} \|^2 + \alpha \|  x_{\alpha}^{i} - x^* \|^2 = \| F(w) - y \|^2 + \alpha \| w - x^* \|^2 ~.
\end{equation}
We have also shown that the solution $w$ is minimizer in $\mathcal K$ of \eqref{eq:definition problem}. 

It remains to be proven that $x_{ \alpha}^{i}$ strongly converges to $w$.
By contradiction, let us suppose that $x_{\alpha}^{i} \nrightarrow w$. 
Then, there exists a subsequence $\{x_{ \alpha}^{l}\}_{l \in \mathbb N}$ such that $x_{ \alpha}^{l} \rightharpoonup w $, $F(x_{\alpha}^{l}) \rightharpoonup F(w)$, and
\begin{align*}
\lim_{l \to \infty} \| x_{\alpha}^{l} - x^* \| = \limsup_{i \to \infty} \| x_{\alpha}^{i} - x^* \| := c ~.
\end{align*}
By using the first inequality in \eqref{eq: inferiore semicontinuità}, we have that $c > \| w - x^* \|$.
From \eqref{eq: Tikhonov convegrence} we obtain that
\begin{equation}  
\limsup_{l \to \infty} \| F(x_{\alpha}^{l}) - y^{l} \|^2 = \|F(w)- y\|^2 + \alpha (\|w- x^* \|^2 - c^2) < \|F(w) - y\|^2   ~,
\end{equation}
which contradicts the second inequality in \eqref{eq: inferiore semicontinuità}.
\end{proof}

Since the solution of problem \eqref{def: x m alpha delta} is not unique, the following result on convergence to an $x^\ast$-minimum norm solution will be provided for any convergent subsequence of $x_{m, \alpha}^\delta$.

\begin{thm}
\label{thm: convergenza to x dagger}
Under hypophyses $A1$, $A2$ and $A3$, let $x^\dagger$ be a $x^*$-minimum norm solution and $y^\delta \in \mathcal Y$ be such that $\| y^\delta - y \| \le \delta$. 
Assume that $F$ is Fréchet differentiable.
Let the parameter choice rules $\alpha: \mathbb R \to \mathbb R$ and $m: \mathbb R \to \mathbb N$ be such that
\begin{itemize}
\item[i)] $\alpha(\delta) \to 0$ and  $\delta^2 / \alpha(\delta) \to 0$ as $\delta \to 0$;
\item [ii)] $m(\delta) \to \infty$ as $\delta \to 0$; 
\item [iii)] for a sequence $\{x_{m(\delta)}\}_{m(\delta) \in \mathbb N}$, with $x_{m(\delta)}\in \mathcal X_{m(\delta)}$, the following holds true
\begin{equation*}
\label{eq: x_m - x dagger vs alpha}
\lim_{\delta \to 0} \frac{\|x^\dagger - x_{m(\delta)} \| ^2}{\alpha(\delta)} =0 ~.
\end{equation*}
\end{itemize}
If we consider a sequence $\delta_k \to 0$ as $k \to \infty$, then
\begin{equation}
    x_{m_k, \alpha_k}^{\delta_k} \in \arg \min_{z \in \mathcal X_{m_k}} \| F(z) - y^{\delta_k} \|^2 + \alpha_k \| z- x^* \|^2 ~, 
 \end{equation}
with $\| y^{\delta_k} - y \| \le \delta_k $, $\alpha_k \coloneqq \alpha(\delta_k)$ and $m_k \coloneqq m(\delta_k)$, has a convergent subsequence. Moreover, the limit of each convergent subsequence is an $x^*$-minimum norm solution.
\end{thm}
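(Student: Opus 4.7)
The plan is to mirror the classical convergence proof for Tikhonov regularization (as in Chapter~10 of \cite{engl1996regularization}) but with the comparison element chosen from inside the compact set $\mathcal X_{m_k}$: I would test the minimality of $x_{m_k,\alpha_k}^{\delta_k}$ against the sequence $x_{m_k} := x_{m(\delta_k)} \in \mathcal X_{m_k}$ provided by hypothesis~(iii), which by assumption approximates $x^\dagger$ sufficiently fast with respect to $\alpha_k$.

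First I would write the minimality inequality
\begin{equation*}
\|F(x_{m_k,\alpha_k}^{\delta_k}) - y^{\delta_k}\|^2 + \alpha_k \|x_{m_k,\alpha_k}^{\delta_k} - x^*\|^2 \le \|F(x_{m_k}) - y^{\delta_k}\|^2 + \alpha_k \|x_{m_k} - x^*\|^2
\end{equation*}
and control $\|F(x_{m_k}) - y^{\delta_k}\|$ by $\|F(x_{m_k}) - F(x^\dagger)\| + \delta_k$, since $F(x^\dagger) = y$. Fréchet differentiability of $F$ yields a local Lipschitz bound near $x^\dagger$, so $\|F(x_{m_k}) - F(x^\dagger)\| = O(\|x_{m_k} - x^\dagger\|)$; combined with conditions~(i) and~(iii) this gives $\|F(x_{m_k}) - y^{\delta_k}\|^2/\alpha_k \to 0$. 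Dividing the minimality inequality by $\alpha_k$ then shows that $\{x_{m_k,\alpha_k}^{\delta_k}\}$ is bounded with $\limsup_k \|x_{m_k,\alpha_k}^{\delta_k} - x^*\|^2 \le \|x^\dagger - x^*\|^2$; simultaneously, discarding the $\alpha_k$-term shows $\|F(x_{m_k,\alpha_k}^{\delta_k}) - y^{\delta_k}\|^2 \to 0$, hence $F(x_{m_k,\alpha_k}^{\delta_k}) \to y$ strongly.

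Next I would extract a weakly convergent subsequence $x_{m_k,\alpha_k}^{\delta_k} \rightharpoonup w$. Weak closedness of $F$ (A2) forces $w \in D(F)$ and $F(w) = y$, while weak closedness of $C$ (A3) together with $\mathcal X_{m_k} \subseteq C$ forces $w \in C$. Weak lower semicontinuity of the norm then gives
\begin{equation*}
\|w - x^*\| \le \liminf_{k \to \infty} \|x_{m_k,\alpha_k}^{\delta_k} - x^*\| \le \limsup_{k \to \infty} \|x_{m_k,\alpha_k}^{\delta_k} - x^*\| \le \|x^\dagger - x^*\|,
\end{equation*}
so $w$ itself minimizes $\|\cdot - x^*\|$ over $\{z \in C \cap D(F) : F(z) = y\}$ and is therefore an $x^*$-minimum norm solution. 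This chain of inequalities also squeezes $\|x_{m_k,\alpha_k}^{\delta_k} - x^*\| \to \|w - x^*\|$, which combined with weak convergence in a Hilbert space upgrades the subsequence to strong convergence.

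The main obstacle I anticipate is managing the interplay between the three parameter-choice rules so that the comparison term $\|F(x_{m_k}) - y^{\delta_k}\|^2$ becomes negligible against $\alpha_k$: this is precisely why Fréchet differentiability of $F$ is required (to translate the $\mathcal X$-side approximation rate of~(iii) into a $\mathcal Y$-side rate) and why~(iii) is stated as $\|x^\dagger - x_{m(\delta)}\|^2/\alpha(\delta) \to 0$ rather than merely $x_{m(\delta)} \to x^\dagger$. Apart from this quantitative balance, the argument is a direct transplant of the standard Tikhonov-in-a-weakly-closed-set proof, with the compactness of each $\mathcal X_m$ used only to ensure that the minimizers $x_{m_k,\alpha_k}^{\delta_k}$ exist (via Theorem~\ref{thm: stabilità Tikhonov in xm}).
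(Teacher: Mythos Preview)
Your proposal is correct and follows essentially the same route as the paper: test minimality against $x_{m_k}\in\mathcal X_{m_k}$, use Fr\'echet differentiability plus conditions (i) and (iii) to make the residual term negligible after dividing by $\alpha_k$, extract a weak limit $w$, invoke weak closedness of $F$ and of $C$, and then squeeze the norms. The only cosmetic difference is in the last step: the paper expands $\|x_{m_k,\alpha_k}^{\delta_k}-w\|^2$ explicitly, whereas you cite the Hilbert-space fact that weak convergence plus norm convergence implies strong convergence---these are equivalent.
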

\begin{proof}
Let us consider a sequence $\delta_k \to 0$ and $\{x_{m_k}\}_{k \in \mathbb N}$ as in assumption $iii)$.
First of all, we show that $\{x_{m_k, \alpha_k}^{\delta_k}\}_{k \in \mathbb N}$ is bounded.
We have
\begin{eqnarray}
\label{eq: x_mk, alohak, deltak limitata}
\alpha_k \| x_{m_k, \alpha_k}^{\delta_k} - x^* \|^2 &\le& \| F(x_{m_k, \alpha_k}^{\delta_k}) - y^{\delta_k} \|^2 +  \alpha_k \| x_{m_k, \alpha_k}^{\delta_k} - x^* \|^2 \nonumber \\
& \le & \| F(x_{m_k}) - y^{\delta_{k}} \|^2 + \alpha_k \| x_{m_k} - x^* \|^2 \\
& \le & \| F(x_{m_k}) - y \|^2 + \delta_k^2 + 2 \delta_k \|F(x_{m_k}) - y \| \nonumber \\
& & + \alpha_k \| x_{m_k} - x^* \|^2 ~. \nonumber 
\end{eqnarray}
Since \( x_{m_k} \to x^\dagger \) and $F$ is Fréchet differentiable at $x^\dagger$, we eventually have
\begin{equation}
\| F(x_{m_k}) - F(x^\dagger) \| \le \|F'(x^\dagger) \| \| x_{m_k} - x^\dagger \| + o ( \| x_{m_k} - x^\dagger \| ) ~.
\end{equation}
Therefore, we obtain
\begin{eqnarray}
\label{eq: limite norma x_m, alpha^ dagger} 
\alpha_k \| x_{m_k, \alpha_k}^{\delta_k} - x^* \|^2  &\leq & \|F'(x^\dagger)\|^2 \| x_{m_k} - x^\dagger \|^2 + o(\| x_{m_k} - x^\dagger \|^2) + \delta_k^2 \nonumber \\
& & + 2 \delta_k \| F'(x^\dagger) \|  \| x_{m_k} - x^\dagger \| + 2 \delta_k o ( \| x_{m_k}- x^\dagger \|) \\
& & + \alpha_k \| x_{m_k} - x^* \|^2. \nonumber
\end{eqnarray}
By dividing by $\alpha_k$ both sides of \eqref{eq: limite norma x_m, alpha^ dagger} and by using assumptions $i)$ and $iii)$, we have that 
\begin{equation}\label{eq:x_m_alpha_k meno x_star converges}
\| x_{m_k, \alpha_k}^{\delta_k} - x^* \| \to \| x^\dagger - x^* \| ~.
\end{equation}
Therefore, by using the reverse triangle inequality $| \|x_{m_k, \alpha_k}^{\delta_k} \| - \| x^* \| | \le \| x_{m_k, \alpha_k}^{\delta_k} - x^* \|$, we conclude that the sequence $\{x_{m_k, \alpha_k}^{\delta_k}\}_{k \in \mathbb N}$ is bounded.
Then, up to take a subsequence, there exists an element $w \in \mathcal X$ such that
\begin{equation}
    \label{eq: debole convegrenze x mk, ak  deltak}
    x_{m_k, \alpha_k}^{\delta_k} \rightharpoonup w ~~ \text{as} ~~ k \to \infty ~.
\end{equation}
By comparing the right end sides of \eqref{eq: x_mk, alohak, deltak limitata} when $k \to \infty$, we obtain
\begin{equation}
\label{eq: convergenze ai dati}
F(x_{m_k, \alpha_k}^{\delta_k}) \to y ~~ \text{as} ~~ k \to \infty ~.
\end{equation}
Since $C$ and $F$ are weakly closed, we obtain that $w \in D(F) \cap C$ and $F(w)=y$.

It remains to prove that $w$ minimizes the distance from the a priori solution $x^*$.
By using \eqref{eq:x_m_alpha_k meno x_star converges} and the lower semicontinuity of the norm we obtain 
\begin{equation}
\| w - x^* \| \le \limsup_{k \to \infty} \| x_{m_k, \alpha_k}^{\delta_k} - x^* \| = \| x^\dagger - x^* \| \le \|w - x^* \| ~.
\end{equation}
This implies that $w$ is a $x^*$-minimum norm solution. 
Moreover, since
\begin{equation*}
\| x_{m_k, \alpha_k}^{\delta_k} - w \|^2 = \| x_{m_k, \alpha_k}^{\delta_k} - x^* \|^2 + \|x^* - w \|^2 + 2 \langle x_{m_k, \alpha_k}^{\delta_k} - x^*, x^* - w \rangle ~,
\end{equation*}
we have
\begin{equation*}
\limsup_{k \to \infty} \| x_{m_k, \alpha_k}^{\delta_k} - w \|^2 \le 2 \| x^* - w \|^2 - 2 \lim_{k \to \infty} \langle x_{m_k, \alpha_k}^{\delta_k} - x^*, w- x^* \rangle = 0 ~,
\end{equation*}
and then $\{x_{m_k, \alpha_k}^{\delta_k}\}_{k \in \mathbb N}$ strongly converges to $w$.
\end{proof}

We note that, in the case the $x^*$-minimum norm solution is unique, $x_{m(\delta), \alpha(\delta)}^{\delta}$ strongly converges to $x^\dagger$.
Furthermore, hypothesis $iii)$ is a mild requirement for the rate of convergence, since the existence of the sequence is guaranteed by property \eqref{prop: proprietà}.
We now prove the convergence rates of the regularization method.
\begin{thm}
\label{thm: rate di convergenza}
Consider the assumptions $A1$, $A2$ and $A3$. Let $y^\delta \in \mathcal Y$ be such that $\|y^\delta - y\| \le \delta$. 
Assume the following condition are satisfied:
\begin{enumerate}
\item F is Fréchet-differentiable;
\item there exist $\gamma \ge 0$ such that $\| F'(z) - F'(x^\dagger)  \| \le \gamma  \|z- x^\dagger \|$ for all $z \in D(F)$ in a sufficiently large ball $U$ around $x^\dagger$;
\item there exists $w \in \mathcal Y$ such that $x^\dagger - x^* = F'(x^\dagger)^* w$ such that $\gamma \|w \| <  1$.
\end{enumerate}
 
Let consider a function $m : \mathbb{R} \to \mathbb{N}$ such that 
\begin{eqnarray}
\label{hp: def velocita' di convergenza compatti}
m(\delta) \to \infty & \quad \text{and} \quad \|x_{m(\delta)} - x^\dagger\| \to 0 \quad \text{as} \quad \delta \to 0 ~,
\end{eqnarray}
where $x_{m(\delta)} \in \mathcal{X}_{m(\delta)}$. \\
Then, by denoting $\|x_{m} - x^\dagger\|^2 = \mathcal{O}(f(m))$, for $\alpha \propto \delta$ and if $\frac{f(m(\delta))}{\delta} \to 0$ for $\delta \to 0$, we obtain that $x^\dagger$ that satisfies assumptions $2$ and $3$ is unique and
\begin{equation}
\| F(x_{m(\delta), \alpha(\delta)}^\delta) - y^\delta \| \le \max \left\{ \mathcal O (\delta), \mathcal O\left(\delta^{1/2} \left[f(m(\delta))\right]^{1/4} \right)\right\}
\end{equation}
and
\begin{equation}
\| x_{m(\delta), \alpha(\delta)}^\delta - x^\dagger \| \le \max  \left\{ \mathcal O \left(\delta^{1/2}\right), \mathcal{O}\left(\delta^{-1/2}\left[f(m(\delta))\right]^{1/2}\right)\right\} ~.
\end{equation}
\end{thm}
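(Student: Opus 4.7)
The plan is to adapt the classical proof of Theorem 10.4 in \cite{engl1996regularization} to the present constrained setting, the main novelty being that the minimality property of $x_{m,\alpha}^\delta$ must be tested against a competitor in $\mathcal{X}_{m(\delta)}$ rather than against $x^\dagger$ itself (which need not belong to $\mathcal{X}_{m(\delta)}$). Concretely, I would take as the comparison element the sequence $x_{m(\delta)}\in\mathcal{X}_{m(\delta)}$ from hypothesis \eqref{hp: def velocita' di convergenza compatti}, which introduces an additional error of size $\|x_{m(\delta)}-x^\dagger\|^2=\mathcal{O}(f(m(\delta)))$ on top of the classical noise contribution.

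The main algebraic step is to exploit the minimizing inequality
\begin{equation*}
\|F(x_{m,\alpha}^\delta)-y^\delta\|^2 + \alpha\|x_{m,\alpha}^\delta-x^*\|^2 \le \|F(x_{m(\delta)})-y^\delta\|^2 + \alpha\|x_{m(\delta)}-x^*\|^2,
\end{equation*}
and expand both penalty terms using the source condition $x^\dagger-x^*=F'(x^\dagger)^*w$ via the identity
\begin{equation*}
\|z-x^*\|^2 = \|z-x^\dagger\|^2 + 2\langle F'(x^\dagger)(z-x^\dagger),w\rangle + \|x^\dagger-x^*\|^2,
\end{equation*}
so that the $\|x^\dagger-x^*\|^2$ contributions cancel. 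Condition 2 yields, via a standard mean-value argument, the Taylor remainder bound $\|F(z)-F(x^\dagger)-F'(x^\dagger)(z-x^\dagger)\|\le \tfrac{\gamma}{2}\|z-x^\dagger\|^2$, which lets me replace every $\langle F'(x^\dagger)(z-x^\dagger),w\rangle$ by $\langle F(z)-y,w\rangle$ up to a quadratic remainder of size $\tfrac{\gamma}{2}\|w\|\|z-x^\dagger\|^2$. Bounding also $\|F(x_{m(\delta)})-y\|=\mathcal{O}(\|x_{m(\delta)}-x^\dagger\|)=\mathcal{O}(\sqrt{f(m(\delta))})$ and absorbing the term linear in $\|F(x_{m,\alpha}^\delta)-y^\delta\|$ by Young's inequality, the hypothesis $\gamma\|w\|<1$ preserves a strictly positive coefficient of $\|x_{m,\alpha}^\delta-x^\dagger\|^2$ on the left. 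This produces a master inequality of the schematic form
\begin{equation*}
\tfrac{1}{2}\|F(x_{m,\alpha}^\delta)-y^\delta\|^2 + \alpha(1-\gamma\|w\|)\|x_{m,\alpha}^\delta-x^\dagger\|^2 \le \mathcal{O}\bigl(\delta^2+\alpha^2+\alpha\delta+\alpha\sqrt{f(m(\delta))}+\delta\sqrt{f(m(\delta))}+f(m(\delta))\bigr).
\end{equation*}

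Substituting $\alpha\propto\delta$ collapses the right-hand side to terms of orders $\delta^2$, $\delta\sqrt{f(m(\delta))}$ and $f(m(\delta))$. Comparing the two regimes $f(m(\delta))\lesssim\delta^2$ and $f(m(\delta))\gtrsim\delta^2$ then gives the two branches of the stated maxima: in the first regime the $\delta^2$ contribution dominates and yields the $\mathcal{O}(\delta)$ residual rate and, after division by $\alpha$, the $\mathcal{O}(\delta^{1/2})$ error rate; in the second regime the $\sqrt{f}$- and $f$-dependent contributions produce the $\mathcal{O}(\delta^{1/2}f(m(\delta))^{1/4})$ and $\mathcal{O}(\delta^{-1/2}f(m(\delta))^{1/2})$ branches. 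The assumption $f(m(\delta))/\delta\to 0$ ensures that both $x_{m(\delta)}$ and $x_{m,\alpha}^\delta$ eventually enter the ball $U$ where condition 2 is available (strong convergence of $x_{m,\alpha}^\delta$ towards $x^\dagger$ follows from Theorem \ref{thm: convergenza to x dagger} once uniqueness is established).

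For the uniqueness claim, I would argue directly: suppose two elements $x^\dagger_1,x^\dagger_2$ both satisfy conditions 2--3 with representers $w_1,w_2$. Since $F(x^\dagger_1)=F(x^\dagger_2)=y$, plugging the source conditions into $\|x^\dagger_1-x^\dagger_2\|^2=\langle x^\dagger_1-x^\dagger_2,F'(x^\dagger_1)^*w_1-F'(x^\dagger_2)^*w_2\rangle$ and applying the Taylor remainder bound to each $F'(x^\dagger_i)(x^\dagger_1-x^\dagger_2)$ yields $\|x^\dagger_1-x^\dagger_2\|^2\le\tfrac{\gamma}{2}(\|w_1\|+\|w_2\|)\|x^\dagger_1-x^\dagger_2\|^2$, and the constraint $\gamma\|w_i\|<1$ forces $x^\dagger_1=x^\dagger_2$. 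The main obstacle will be the bookkeeping of the Young-type splits and Taylor remainders so that the positive factor $(1-\gamma\|w\|)$ survives absorption on the left and the several competing scales $\delta^2$, $\delta\sqrt{f(m(\delta))}$, $f(m(\delta))$ combine cleanly into the advertised two-branch maxima; the replacement of $x^\dagger$ by $x_{m(\delta)}$ as the test element is unavoidable and is precisely what generates the new $f(m(\delta))$-dependent contribution.
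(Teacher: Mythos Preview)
Your proposal is correct and follows essentially the same route as the paper: test the minimality of $x_{m,\alpha}^\delta$ against the competitor $x_{m(\delta)}\in\mathcal X_{m(\delta)}$, expand $\|z-x^*\|^2$ around $x^\dagger$, use the source condition $x^\dagger-x^*=F'(x^\dagger)^*w$ together with the Taylor remainder bound $\|F(z)-F(x^\dagger)-F'(x^\dagger)(z-x^\dagger)\|\le\tfrac{\gamma}{2}\|z-x^\dagger\|^2$, and arrive at a master inequality with the factor $(1-\gamma\|w\|)$ in front of $\|x_{m,\alpha}^\delta-x^\dagger\|^2$. The only minor deviations are that the paper completes the square in $\|F(x_{m,\alpha}^\delta)-y^\delta\|$ and $\alpha\|w\|$ (obtaining $(\|F(x_{m,\alpha}^\delta)-y^\delta\|-\alpha\|w\|)^2$ on the left) rather than using Young's inequality, and that the paper secures $x_{m,\alpha}^\delta\in U$ by a direct self-contained estimate of $\|x_{m,\alpha}^\delta-x_m\|$ from the minimizing inequality, rather than invoking Theorem~\ref{thm: convergenza to x dagger}; your route via that theorem is legitimate once uniqueness is in hand, and your explicit uniqueness argument is in fact more detailed than what the paper spells out.
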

\begin{proof}
Denote with $\xi_m \coloneqq x_{m(\delta)} - x^\dagger$, $\xi_m^* \coloneqq x_{m(\delta)} - x^*$, and $J \coloneqq \|F'(x^\dagger)\|$.
Denote the family of Tikhonov solutions
$x_{m, \alpha}^\delta$, where the dependency of $m$ and $\alpha$ on $\delta$ is omitted.
Since $x_{m} \to x^\dagger$ as $m \to \infty$ there exists an $M$ such that $x_m$ belongs to $U$ for every $m > M$. 
Then, by using the Fréchet differentiability of $F$ we have
\begin{equation}
\label{eq: dis frechet}
    F(x_m)=F(x^\dagger) + F'(x^\dagger)\,\xi_m + r_m~, \quad\text{with}~ \|r_m \| \le \frac{\gamma}{2} \| \xi_m \|^2 ~.
\end{equation}
By using the triangle inequality, we obtain
\begin{eqnarray}
\label{eq: dis minimzzazione}
\| F(x_{m, \alpha}^{\delta}) - y^\delta \|^2 + \alpha \| x_{m, \alpha}^\delta - x^* \|^2 &\le & \| F(x_m) - y^\delta \|^2 + \alpha \| x_m- x^* \|^2  \nonumber \\
&\le & (\| F(x_m) - y \| + \delta)^2 + \alpha \|\xi_m^* \|^2 ~.
\end{eqnarray}
We show that the family $x_{m, \alpha}^\delta$ eventually belongs to the open neighborhood defined in the assumption 2.
By considering
\begin{equation}
\| x_{m, \alpha}^\delta - x^* \|^2 = \| x_{m, \alpha}^\delta - x_m \|^2 + \| x_m - x^* \|^2 + 2 \langle x_{m, \alpha}^\delta - x_m, x_m - x^* \rangle  ~,
\end{equation}
and the Cauchy-Schwarz inequality, we obtain
\begin{eqnarray}
\|F(x_{m, \alpha}^\delta) - y^\delta \|^2 + \alpha \|x_{m, \alpha}^\delta - x_m \|^ 2 
\!\! & \le & \!\!
(\|F(x_m) - y \| + \delta)^2 - 2 \alpha \langle x_{m, \alpha}^\delta - x_m, \xi_m ^* \rangle  \nonumber \\
\!\! & \le & \!\!
(\|F(x_m) - y \| + \delta)^2 + 2 \alpha \| \xi_m^* \|\|x_{m, \alpha}^\delta - x_m \| ~.
\end{eqnarray}
An upper bound of $\|F(x_m) - y\|$ is obtained by using \eqref{eq: dis frechet} and the triangle inequality, as follows
\begin{eqnarray*}
\alpha \|x_{m, \alpha}^\delta - x_m \|^ 2 &\le &  
J^2 \|\xi_m \|^2 + \frac{\gamma^2}{4} \| \xi_m\| ^ 4 + \gamma J \|\xi_m\|^3 + \delta^2 + 2 \delta J \|\xi_m \| \\
& & + \delta \gamma \| \xi_m\|^2 + 2 \alpha \|\xi_m^* \|\|x_{m, \alpha}^\delta - x_m \| ~.
\end{eqnarray*}
Therefore, by completing the square in the variables $\|x_{m, \alpha}^\delta - x_m \|$ and $\|\xi_m^* \|$ we have
\begin{eqnarray}
\|x_{m, \alpha}^\delta - x_m \| & \le & \|\xi_m^* \| + 
\bigg( \|\xi_m^* \|^2 + \frac{ J^2 \|\xi_m \|^2 }{\alpha} + \frac{ \gamma^2  \| \xi_m \|^4}{4 \alpha}
\nonumber \\
& & + \frac{ \gamma J \|\xi_m \|^3}{\alpha} + \frac{\delta^2}{\alpha} + \frac{2 \delta J \|\xi_m \|}{\alpha} + \frac{\delta \gamma \|\xi_m \|^2}{\alpha} \bigg)^{\frac{1}{2}} ~.
\end{eqnarray}
By using assumption 5 and thanks to the choice $\alpha \propto \delta$, we have that $x_{m, \alpha}^\delta \in B_\rho(x^\dagger)$ for any fixed $\rho > 2 \| x^\dagger - x^* \| $.
Without loss of generality, we can assume that $U \supset B_\rho(x^\dagger)$. 

By plugging
\begin{equation}
\| x_{m, \alpha}^\delta - x^* \|^2 = \| x_{m, \alpha}^\delta - x^\dagger \|^2 + \|x^\dagger - x^* \|^2 + 2 \langle x_{m, \alpha}^\delta - x^\dagger, x^\dagger - x^* \rangle, 
 \end{equation}
into \eqref{eq: dis minimzzazione} and by applying the triangle inequality $ \|x_m - x^* \| \le \|x_m - x^\dagger \| + \|x^\dagger - x^* \|$, we obtain
\begin{eqnarray}
\label{eq: studio Tikhonov}
\| F(x_{m, \alpha}^\delta) - y^\delta \|^2 + 
\alpha \| x_{m, \alpha}^\delta - x^\dagger \|^2  
& \le &
(\|F(x_m ) - y\| + \delta)^2 \nonumber \\
& & 
+ \alpha \left( \|\xi_m \|^2 + 2 \|\xi_m \| \|x^\dagger - x^* \| \right)\\
& & + 2 \alpha |\langle x_{m, \alpha}^\delta - x^\dagger, x^\dagger - x^* \rangle | ~. \nonumber
\end{eqnarray}
By using assumption $3$ and by replacing $x_m$ with $x_{m, \alpha}^\delta$ and $r_m$ with $r_{m, \alpha}^\delta$ in \eqref{eq: dis frechet}, we obtain
\begin{eqnarray}
\label{eq:scalar-product}
2 \alpha | \langle x_{m, \alpha}^\delta - x^\dagger, x^\dagger - x^* \rangle | & = & 2 \alpha | \langle F'(x^\dagger)(x_{m, \alpha}^\delta - x^\dagger), w \rangle | \nonumber \\
& = & 2 \alpha | \langle F(x_{m, \alpha}^\delta) - F(x^\dagger) - r_{m, \alpha}^\delta, w \rangle | \nonumber \\
& \le &  2 \alpha \|w \| \|F(x_{m, \alpha}^\delta) - y^\delta \|  + 2 \alpha \delta \|w \| \\
& & + \alpha \gamma \| w \| \| x_{m, \alpha}^\delta - x^\dagger \|^2 \nonumber ~.
\end{eqnarray}
By plugging \eqref{eq:scalar-product} in \eqref{eq: studio Tikhonov}, by using \eqref{eq: dis frechet} and by adding $\alpha^2 \|w \|^2$ to both members, we obtain
\begin{gather*}
    (\|F(x_{m, \alpha}^\delta) - y^\delta \| - \alpha \|w \|)^2 + \alpha(1-\gamma \|w \| )\|x_{m, \alpha}^\delta - x^\dagger \|^2 \le \\
    (\delta + \alpha \|w \|)^2 + (2 \delta J + 2 \alpha \|x^\dagger - x^* \| ) \|\xi_m \| + (\gamma \delta + \alpha + J^2) \|\xi_m \|^2 + o(\|\xi_m \|^2)  ~.
\end{gather*}
Then we conclude the rate of convergence for the data  and the solution: 
\begin{eqnarray}
    \| F(x_{m, \alpha}^\delta)-y^\delta\| & \le  & \alpha \|w\| + \nonumber \\ 
    & & \bigg(
    (\delta + \alpha \|w \|)^2 + (2 \delta J + 2 \alpha \|x^\dagger - x^* \| ) \|\xi_m \| \\
    & & + (\gamma \delta + \alpha + J^2) \|\xi_m \|^2 + o(\|\xi_m \|^2) \bigg)^{1/2} ~, \nonumber
\end{eqnarray}
and
\begin{eqnarray}
\|x_{m, \alpha}^\delta - x^\dagger \| &\le&  
 \frac{1}{(\alpha(1 - \gamma \|w \|))^{1/2}} \bigg( (\delta + \alpha \|w \|)^2 + (2 \delta J + 2 \alpha \|x^\dagger - x^* \| ) \|\xi_m \| \nonumber\\
 & & + (\gamma \delta + \alpha + J^2) \|\xi_m \|^2 + o(\|\xi_m \|^2) \bigg)^{1/2}  ~.
\end{eqnarray}
The thesis follows from these two inequalities.
\end{proof}

The derived convergence rates are identical to those achieved by Ritz’s regularization method for linear operators on nested subspace sequences $\cite{groetsch1984theory}$.
The theorem also shows that, if the subsequence of compact sets converges in such a way that $f(m(\delta))$ is at least $o(\delta^2)$, this method has the same convergence rates of Tikhonov's classical regularization. 
However, if $f(m(\delta))$ converges faster than $\delta^2$, the error rate does not improve.
Finally, we observe that the regularized solution $x_{m(\delta), \alpha(\delta)}^\delta$ belongs at each iteration to the weakly closed set $C$. \\

We now prove that the proposed method achieves the convergence rate $\mathcal{O}(\delta^{2/3})$ for the solution, under the classical  regularity assumption $x^\dagger - x^* \in R((F'(x^\dagger)^*F'(x^\dagger)^\mu$), where $\mu \in [1/2, 1]$.
\begin{thm}
\label{thm: convergenza rate ottimale}
Assume all the hypotheses of theorem \ref{thm: rate di convergenza} hold. Let be $x^\dagger$ is an element in the interior of $D(F) \cap C$ and let $x^\dagger$ satisfies the above mentioned regularity property.
Let us consider a function $m(\delta): \mathbb R \to \mathbb N$ such that 
\begin{eqnarray}
    m(\delta) \to \infty \quad \text{and} \quad \|z_{m(\delta)} - z_\alpha \| \to 0 \quad \text{as} \quad \delta \to 0
\end{eqnarray} 
with $z_{m(\delta)} \in \mathcal{X}_{m(\delta)}$ and  
\begin{equation}
  z_\alpha \coloneqq x^\dagger - \alpha\left( F'(x^\dagger)^* F'(x^\dagger) + \alpha I \right)^{-1}F'(x^\dagger)^* w ~
\end{equation}
where $I$ represent the identity on $\mathcal X$. Let us denote $f(m(\delta)) \coloneqq \|z_{m(\delta)}-z_\alpha\|^2$. Then, if the parameter choice rules satisfy
\begin{eqnarray}
\label{eq: convergenza a x}
\alpha(\delta) = \mathcal O\left(\delta^{\frac{2}{2\mu + 1}}\right) & \quad \text{and} \quad \dfrac{f(m(\delta))}{\delta^{\frac{2}{2\mu + 1}}} \to 0 \quad \text{as} \quad \delta \to 0 ~,
\end{eqnarray}
we obtain that $x^\dagger$ is unique and
\begin{equation}
\| x_{m(\delta), \alpha(\delta)}^\delta - x^\dagger \| \le \max  \left\{ \mathcal O\left(\delta^{\frac{2\mu}{2\mu + 1}}\right), \mathcal{O}\left(\delta^{-\frac{1}{2(2\mu + 1)}}f(m(\delta))^{1/4}\right)\right\} ~.
\end{equation}
\end{thm}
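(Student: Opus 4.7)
My plan is to adapt the proof of Theorem~\ref{thm: rate di convergenza} by benchmarking the constrained Tikhonov minimizer against the spectrally--filtered reference $z_{\alpha}$ rather than against $x^{\dagger}$ directly. Since $z_{m(\delta)} \in \mathcal{X}_{m(\delta)}$ approximates $z_{\alpha}$ at rate $f(m(\delta))$, and $z_{\alpha}$ encodes the higher regularity $x^{\dagger}-x^{*}\in R((F'(x^{\dagger})^{*}F'(x^{\dagger}))^{\mu})$, this substitution is exactly what upgrades the $\delta^{1/2}$ rate of the previous theorem to the optimal $\delta^{2\mu/(2\mu+1)}$ one. I would start from the minimality inequality
\[
\|F(x_{m,\alpha}^{\delta}) - y^{\delta}\|^{2} + \alpha \|x_{m,\alpha}^{\delta}-x^{*}\|^{2} \le \|F(z_{m(\delta)}) - y^{\delta}\|^{2} + \alpha \|z_{m(\delta)}-x^{*}\|^{2},
\]
valid since $z_{m(\delta)}\in\mathcal{X}_{m(\delta)}$, and triangulate through $z_{\alpha}$, using Lipschitz Fréchet differentiability to bound $\|F(z_{m(\delta)}) - F(z_{\alpha})\| \le J\sqrt{f(m(\delta))} + O(f(m(\delta)))$ and $\|z_{m(\delta)} - x^{*}\| \le \|z_{\alpha}-x^{*}\| + \sqrt{f(m(\delta))}$.

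The next step is a standard spectral--calculus argument on $z_{\alpha} - x^{\dagger} = -\alpha (F'(x^{\dagger})^{*}F'(x^{\dagger}) + \alpha I)^{-1}(x^{\dagger}-x^{*})$: under the source condition of order $\mu\in[1/2,1]$ one obtains $\|z_{\alpha}-x^{\dagger}\| = O(\alpha^{\mu})$ and $\|F'(x^{\dagger})(z_{\alpha}-x^{\dagger})\| = O(\alpha^{\mu+1/2})$. Combining these with the Taylor remainder estimate $\|F(z_{\alpha}) - F(x^{\dagger}) - F'(x^{\dagger})(z_{\alpha}-x^{\dagger})\| \le \tfrac{\gamma}{2}\|z_{\alpha}-x^{\dagger}\|^{2}$ from assumption~2 of Theorem~\ref{thm: rate di convergenza} yields
\[
\|F(z_{\alpha}) - y\|^{2} + \alpha\|z_{\alpha}-x^{*}\|^{2} = \alpha\|x^{\dagger}-x^{*}\|^{2} + O(\alpha^{2\mu+1}),
\]
which is what makes the comparison with $z_{\alpha}$ genuinely sharper than the comparison with $x_{m(\delta)}$ used before.

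To handle the term $\alpha\|x_{m,\alpha}^{\delta}-x^{*}\|^{2}$ on the left, I would expand $\|x_{m,\alpha}^{\delta}-x^{*}\|^{2} = \|x_{m,\alpha}^{\delta}-x^{\dagger}\|^{2} + \|x^{\dagger}-x^{*}\|^{2} + 2\langle x_{m,\alpha}^{\delta}-x^{\dagger},\, x^{\dagger}-x^{*}\rangle$ and control the scalar product through the $\mu=1/2$ source representation $x^{\dagger}-x^{*} = F'(x^{\dagger})^{*}w$ inherited from assumption~3 of Theorem~\ref{thm: rate di convergenza}, exactly as in~\eqref{eq:scalar-product}. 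The resulting $\alpha\gamma\|w\|\|x_{m,\alpha}^{\delta}-x^{\dagger}\|^{2}$ is absorbed on the left thanks to $\gamma\|w\|<1$. Completing the square in $\|F(x_{m,\alpha}^{\delta}) - y^{\delta}\|$ and collecting the estimates produces an inequality of the shape
\[
(1-\gamma\|w\|)\,\alpha\,\|x_{m,\alpha}^{\delta}-x^{\dagger}\|^{2} \le O(\delta^{2}) + O(\alpha^{2\mu+1}) + O\!\bigl(\sqrt{\alpha\,f(m(\delta))}\bigr),
\]
so that the parameter choice $\alpha \propto \delta^{2/(2\mu+1)}$ equalizes the first two terms, while $f(m(\delta))/\delta^{2/(2\mu+1)} \to 0$ keeps the third subdominant; dividing by $\alpha$ and taking square roots produces the two branches $O(\delta^{2\mu/(2\mu+1)})$ and $O(\delta^{-1/(2(2\mu+1))} f(m(\delta))^{1/4})$ of the stated $\max$. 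Uniqueness of $x^{\dagger}$ follows by applying the same source representation to a second hypothetical $x^{*}$--minimum--norm solution and deriving a contradiction from $\gamma\|w\|<1$.

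The main obstacle I anticipate is the mixed contribution $\sqrt{\alpha\, f(m(\delta))}$: its exponent $1/2$ in $f$, rather than the $1$ appearing in Theorem~\ref{thm: rate di convergenza}, arises because the Taylor expansion of $F$ at $z_{\alpha}$ (not at $x^{\dagger}$) couples $\sqrt{f(m(\delta))}$ with $\sqrt{\alpha}$ through Cauchy--Schwarz applied to cross terms of the form $\alpha\langle x_{m,\alpha}^{\delta}-z_{m(\delta)},\, z_{\alpha}-x^{*}\rangle$. Tracking these mixed terms carefully --- and verifying that no other lower--order contribution dominates them --- is the central bookkeeping challenge, and it also explains why the hypothesis in~\eqref{eq: convergenza a x} is imposed on $f(m(\delta))/\delta^{2/(2\mu+1)}$ rather than on $f(m(\delta))/\alpha$.
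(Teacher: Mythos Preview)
Your overall architecture --- benchmark against $z_m$, pass through $z_\alpha$, exploit the higher source condition spectrally --- is correct and matches the paper. However, the step ``control the scalar product through the $\mu=1/2$ source representation \dots\ exactly as in~\eqref{eq:scalar-product}, then complete the square in $\|F(x_{m,\alpha}^\delta)-y^\delta\|$'' is too coarse and will \emph{not} deliver the rate $\delta^{2\mu/(2\mu+1)}$ for $\mu>1/2$. The estimate~\eqref{eq:scalar-product} bounds $2\alpha\langle x_{m,\alpha}^\delta-x^\dagger,\,x^\dagger-x^*\rangle$ by $2\alpha\|w\|\,\|F(x_{m,\alpha}^\delta)-y^\delta\|+\dots$; after completing the square you are left with an additive $\alpha^2\|w\|^2$ on the right, and hence (after dividing by $\alpha$) with $\|x_{m,\alpha}^\delta-x^\dagger\|^2\le C\alpha\|w\|^2+\dots$, i.e.\ $\|x_{m,\alpha}^\delta-x^\dagger\|=O(\sqrt{\alpha})=O(\delta^{1/(2\mu+1)})$. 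That is exactly the saturated $\mu=1/2$ rate of Theorem~\ref{thm: rate di convergenza}; your claimed inequality with only $O(\delta^2)+O(\alpha^{2\mu+1})+O(\sqrt{\alpha f})$ on the right is missing this $\alpha^2\|w\|^2$ term.

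The fix --- and the point of the paper's argument --- is to replace the Cauchy--Schwarz bound on the cross term by an \emph{exact cancellation}. Expand
\[
\|F(x_{m,\alpha}^\delta)-y^\delta\|^2=\|F(x_{m,\alpha}^\delta)-y^\delta+\alpha w\|^2-2\alpha\langle F(x_{m,\alpha}^\delta)-y^\delta,\,w\rangle-\alpha^2\|w\|^2,
\]
so that $-2\alpha\langle F'(x^\dagger)(x_{m,\alpha}^\delta-x^\dagger),w\rangle$ cancels exactly with $-2\alpha\langle x_{m,\alpha}^\delta-x^\dagger,\,x^\dagger-x^*\rangle$ (only the Taylor remainder survives, giving the $(1-\gamma\|w\|)$ factor). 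The leftover $\alpha\|w\|^2$ is then \emph{not} bounded separately but combined with the $z_\alpha$ contributions via the spectral identity
\[
\tfrac{1}{\alpha}\|F'(x^\dagger)(z_\alpha-x^\dagger)\|^2+2\langle F'(x^\dagger)(z_\alpha-x^\dagger),w\rangle+\alpha\|w\|^2=\alpha^3\|(F'(x^\dagger)F'(x^\dagger)^*+\alpha I)^{-1}w\|^2=O(\alpha^{2\mu}),
\]
which is precisely what unlocks rates beyond $\mu=1/2$. Your estimate $\|F(z_\alpha)-y\|^2+\alpha\|z_\alpha-x^*\|^2=\alpha\|x^\dagger-x^*\|^2+O(\alpha^{2\mu+1})$ captures the right-hand side correctly, but it cannot compensate for the loss you incur on the left by using~\eqref{eq:scalar-product}. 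Once this cancellation is in place, the $f(m(\delta))$ bookkeeping you describe goes through, and the dominant compact-set contribution is indeed $O\bigl(\sqrt{f(m(\delta))/\alpha}\bigr)$, yielding the second branch $O\bigl(\delta^{-1/(2(2\mu+1))}f(m(\delta))^{1/4}\bigr)$.
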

\begin{proof}
The idea is to compare the regularized solution $x_{m,\alpha}^\delta$ with the auxiliary element $z_\alpha$. 
We begin by considering \eqref{eq: dis minimzzazione} where we substitute $x_m$ with $z_m$ since $z_m \in \mathcal X_m$. By adding and subtracting $x^\dagger$ to $x^*$ in the regularization term, we get
\begin{eqnarray}
\label{eq: distanza da x dagger}
    \|x_{m, \alpha}^\delta - x^\dagger \| ^2 & \le&  \frac{1}{\alpha}\left(\|F(z_{m}) - y^\delta \|^2 - \|F(x_{m, \alpha}^\delta) - y^\delta \|^2 \right) \nonumber \\ 
    & & + \| z_m - x^\dagger \|^2 + 2 \langle z_m - x^\dagger, x^\dagger - x^* \rangle  \nonumber \\
     && - 2 \langle x_{m, \alpha}^\delta - x^\dagger, x^\dagger - x^* \rangle ~.
\end{eqnarray}
Following the same steps as in the proof of theorem \ref{thm: rate di convergenza}, we obtain that \eqref{eq: dis frechet} also holds when $x_m$ and $r_m$ are replaced by $x_{m,\alpha}^\delta$ and $r_{m,\alpha}^\delta$, respectively.
Moreover, since $\|z_\alpha - x^\dagger\| \le \sqrt{\alpha} \|w\|$ and $x^\dagger$ lies in the interior of $D(F) \cap C$, it follows that $z_\alpha \in D(F) \cap C$ for sufficiently small $\alpha$, and hence \eqref{eq: dis frechet} also holds 
when $x_m$ and $r_m$ are replaced by $z_\alpha$ and $s_\alpha$, respectively. As a consequence, by using the Fréchet differentiability of $F$ at $z_\alpha$ and some simple algebra, we can write 
\begin{eqnarray}
    \| F(x_{m, \alpha}^\delta) - y^\delta \|^2 & = &  \| F(x_{m, \alpha}^\delta) - y^\delta +\alpha w \|^2  - 2 \langle F'(x^\dagger) (x_{m, \alpha}^\delta - x^\dagger) + r_{m, \alpha}^\delta , \alpha w \rangle \nonumber\\
    && -  \langle 2( y - y^\delta) + \alpha w, \alpha w\rangle \nonumber 
\end{eqnarray}
and
\begin{eqnarray*}
    \|F(z_m) - y^\delta\|^2 & = & \|F'(z_\alpha)(z_m - z_\alpha) + s_{m,\alpha}\|^2 + \|F'(x^\dagger)(z_\alpha - x^\dagger) \|^2 \\
    && + \| s_\alpha +y - y^\delta \|^2  + 2 \langle F'(x^\dagger) (z_\alpha - x^\dagger), s_\alpha + y - y^\delta \rangle   \\
    && + 2 \langle F(z_m) - F(z_\alpha), F(z_\alpha) - y \rangle + 2 \langle F(z_m) - F(z_\alpha), y - y^\delta \rangle~,
\end{eqnarray*}
where $s_{m,\alpha}$ is such that 
\begin{eqnarray*}
    F(z_m) = F(z_\alpha) + F'(z_\alpha) (z_m - z_\alpha ) + s_{m,\alpha} \quad \text{with} ~ \|s_{m,\alpha}\| = o(\|z_m - z_\alpha\|) ~.
\end{eqnarray*}
By plugging these two terms in \eqref{eq: distanza da x dagger} and by using the assumption $3$, we get
\begin{eqnarray}
    \|x_{m, \alpha}^\delta - x^\dagger \|^2 &\le& \frac{1}{\alpha}  \|F'(z_\alpha)(z_m - z_\alpha) + s_{m,\alpha}\|^2 + \frac{1}{\alpha} \|F'(x^\dagger)(z_\alpha - x^\dagger) \|^2 + \frac{1}{\alpha}  \| s_\alpha +y - y^\delta \|^2 \nonumber \\
    && + \frac{2}{\alpha} \langle F'(x^\dagger) (z_\alpha - x^\dagger), s_\alpha + y - y^\delta \rangle  \nonumber \\
    && + \frac{2}{\alpha}  \langle F(z_m) - F(z_\alpha), F(z_\alpha) - y \rangle + \frac{2}{\alpha}  \langle F(z_m) - F(z_\alpha), y - y^\delta \rangle  \nonumber \\
    && +  \langle 2( y - y^\delta) + \alpha w + r_{m, \alpha}^\delta , w\rangle + \| z_m - x^\dagger \|^2 + 2 \langle F'(x^\dagger)(z_m - z_\alpha), w \rangle \\
    && + 2 \langle F'(x^\dagger)(z_\alpha - x^\dagger), w \rangle \nonumber ~. 
\end{eqnarray}
Following \cite{engl1996regularization}, we use the relations
\begin{eqnarray*}
    \frac{1}{\alpha} \| F'(x^\dagger) (z_\alpha - x^\dagger) \|^2 + \langle 2F'(x^\dagger) (z_\alpha - x^\dagger) + \alpha w, w\rangle  &= &\alpha^3 \| (F'(x^\dagger) F'(x^\dagger)^* + \alpha I)^{-1} w\|^2 ~, \\
    \frac{1}{\alpha} \|s_\alpha + y - y^\delta \|^2 &\le& \frac{2}{\alpha} \left( \frac{\gamma}{4}\| z_\alpha - x^\dagger \|^4 + \delta^2 \right)  ~, \\
    2 \langle  y - y^\delta, w \rangle +\frac{2}{\alpha}\langle F'(x^\dagger) (z_\alpha - x^\dagger), s_\alpha + y - y^\delta\rangle &\le& \gamma \|w\| \|z_\alpha - x^\dagger\|^2 \\
    && + 2 \alpha \delta \| (F'(x^\dagger) F'(x^\dagger)^* + \alpha I)^{-1} w\|^2 ~,
\end{eqnarray*}
and we obtain
\begin{eqnarray*}
(1 - \gamma \|w\|)  \|x_{m, \alpha}^\delta - x^\dagger \|^2 &= & \mathcal O \left(\alpha^3 \| (F'(x^\dagger) F'(x^\dagger)^* + \alpha I)^{-1} w\|^2 \right) \\
&& + \mathcal O \left(\frac{\| z_\alpha - x^\dagger \|^4 + \delta^2}{\alpha}  \right) + \mathcal{O}\left( \|z_\alpha - x^\dagger \|^2 \right)\\
&& + \mathcal O \left( \alpha \delta \| (F'(x^\dagger) F'(x^\dagger)^* + \alpha I)^{-1} w\|^2\right) \\
&& +  \mathcal O \left(\frac{\|z_m - z_\alpha\|^2}{\alpha} \right)+ o \left(\frac{\|z_m - z_\alpha\|^2}{\alpha} \right) \\
&& + \mathcal O \left(\frac{\|z_m - z_\alpha \|\|z_\alpha - x^\dagger\|}{\alpha}\right) + o\left(\frac{\|z_m - z_\alpha \|\|z_\alpha - x^\dagger\|}{\alpha}\right) \\
&& + \mathcal O \left(\frac{\|z_m - z_\alpha \|\delta}{\alpha}\right) + o\left(\frac{\|z_m - z_\alpha \|\delta}{\alpha}\right) \\
&& + \mathcal O\left( \|z_m - z_\alpha \| \right) + \mathcal O \left( \|z_m - z_\alpha \|^2 \right) + \mathcal O \left( \|z_\alpha - x^\dagger\| \|z_m - z_\alpha \|\right) ~.
\end{eqnarray*}
The first four terms are controlled by the classical convergence rate, while the others are governed by term $\mathcal O\left(\frac{\|z_m - z_\alpha \|}{\sqrt{\alpha}}\right)$. This implies the thesis.
\end{proof}
As in theorem  \ref{thm: rate di convergenza}, this method has the same convergence rates of Tikhonov regularization if the rate $f(m(\delta))$ of the compact sets  is at least $\mathcal O(\delta^{10/3})$. However, no benefit is obtained even when the rate is faster.

\subsection{Linear case}
In this section, we show how the theoretical results presented in section \ref{sec:Tikhonov-regularization} can be derived for a linear forward operator under milder assumptions. 
Hereafter, the operator $F$ between $\mathcal{X}$ and $\mathcal{Y}$ will be linear and denoted by $A$.
In order to regularize problem \eqref{eq: F(x)=y}, we define a sequence of compact sets $\{\mathcal X_m\}_{m \in N}$ that satisfies \eqref{prop: proprietà}.

\begin{assumption}
We make the following assumptions. 
\begin{enumerate}
\item[A1'.]  $A$ is a bounded operator.
\item[A2'.] $D(A)$ is weakly closed set.
\item[A3'.] $C$ is weakly closed, and there exists a solution of the problem $Ax=y$ such that $x \in C \cap D(A)$.
\end{enumerate}
\end{assumption}

In the linear case, we take $x^* = 0$. For simplicity, we will refer to $x^\star$-minimum norm solutions (see \eqref{def: x dagger}) as \textit{minimum norm solutions}.
The definition of regularized solution in the linear case is a particular case of \eqref{def: x m alpha delta}.

\begin{rem}
\label{prop: linear operato weakly closed}
Under assumptions $A1'$ and $A2'$, the operator $A$ is weakly closed.
Therefore, the existence of minimum norm solutions and regularized solutions is guaranteed by Proposition \ref{prop: esistenza di x dagger} and Weierstrass Theorem.
In addition, since theorems \ref{thm: stabilità Tikhonov in xm} and \ref{thm: convergenza to x dagger} are true in the linear case, the stability and convergence of the proposed regularization method are still valid.
\end{rem}
We establish below that the convergence rates derived in the non-linear setting (theorems \ref{thm: rate di convergenza} and \ref{thm: convergenza rate ottimale}) remain valid under less restrictive assumptions.
\begin{thm}
\label{thm: conveergenza rate classico lineare}
Let assumptions $A1’$, $A2’$, and $A3’$ hold. Suppose $y^\delta \in \mathcal{Y}$ satisfies $\|y^\delta - y\| \leq \delta$, and assume that $x^\dagger \in \mathcal{R}(A^*)$. 
Let consider a function $m: \mathbb R \to \mathbb N$ such that 
$$m(\delta) \to \infty  \quad \text{and} \quad \dfrac{\|x_{m(\delta)} - x^\dagger\|^2}{\delta} \to 0 \quad \text{as} \quad \delta \to 0 $$
where $x_{m(\delta)} \in \mathcal{X}_{m(\delta)}$. Then, by denoting $\|x_m - x^\dagger\|^2 = \mathcal O(f(m))$, for $\alpha \propto \delta$ we obtain that $x^\dagger \ R(A^*)$ is unique and we obtain 
\begin{equation}
\| Ax_{m(\delta), \alpha(\delta)}^\delta - y^\delta \| \le \max \left\{ \mathcal O (\delta), \mathcal O\left( f(m(\delta))^{1/2} \right)\right\}
\end{equation}
and
\begin{equation}
\| x_{m(\delta), \alpha(\delta)}^\delta - x^\dagger \| \le \max  \left\{ \mathcal O \left(\delta^{1/2}\right), \mathcal{O}\left(\delta^{-1/2}\left[f(m(\delta))\right]^{1/2}\right)\right\} ~.
\end{equation}
\end{thm}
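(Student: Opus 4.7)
\medskip

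\noindent\textbf{Proof proposal.} The plan is to mimic the proof of Theorem \ref{thm: rate di convergenza}, exploiting the simplifications available in the linear case: the Fréchet derivative of $A$ is $A$ itself, so the Lipschitz constant $\gamma$ in hypothesis 2 of Theorem \ref{thm: rate di convergenza} can be taken to be zero and the Taylor remainder $r_m$ vanishes identically. Moreover, since $x^*=0$, the source condition reduces to $x^\dagger = A^\ast w$ for some $w\in\mathcal Y$, with no smallness assumption on $\|w\|$ (which is precisely what is meant by ``milder assumptions''). Existence of $x_{m,\alpha}^\delta$ is guaranteed by Remark \ref{prop: linear operato weakly closed}.

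\medskip

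\noindent\textbf{Key steps.} First, from the minimality of $x_{m,\alpha}^\delta$ on $\mathcal X_{m}$, tested against any $x_m\in\mathcal X_m$ with $x_m\to x^\dagger$, together with the triangle inequality and $\|Ax_m - y\| = \|A(x_m - x^\dagger)\|\le \|A\|\,\|\xi_m\|$ (here $\xi_m := x_m - x^\dagger$), I would obtain
\begin{equation*}
\|Ax_{m,\alpha}^\delta - y^\delta\|^2 + \alpha\|x_{m,\alpha}^\delta\|^2 \le (\|A\|\,\|\xi_m\| + \delta)^2 + \alpha\|x_m\|^2.
\end{equation*}
Next, I would expand both squared norms around $x^\dagger$,
\begin{equation*}
\|x_{m,\alpha}^\delta\|^2 = \|x_{m,\alpha}^\delta - x^\dagger\|^2 + \|x^\dagger\|^2 + 2\langle x_{m,\alpha}^\delta - x^\dagger,\,x^\dagger\rangle,
\end{equation*}
and use the source condition $x^\dagger = A^\ast w$ to rewrite the cross term as a duality pairing on $\mathcal Y$:
\begin{equation*}
\langle x_{m,\alpha}^\delta - x^\dagger,\,x^\dagger\rangle = \langle A(x_{m,\alpha}^\delta - x^\dagger),\,w\rangle \le \|w\|\bigl(\|Ax_{m,\alpha}^\delta - y^\delta\| + \delta\bigr),
\end{equation*}
and analogously for the $\langle x_m - x^\dagger,x^\dagger\rangle$ term, bounded via $\|w\|\,\|A\|\,\|\xi_m\|$.

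\medskip

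\noindent\textbf{Completing the square and extracting the rates.} After substituting these expansions into the minimality inequality and adding $\alpha^2\|w\|^2$ to both sides, the estimate takes the form
\begin{equation*}
\bigl(\|Ax_{m,\alpha}^\delta - y^\delta\| - \alpha\|w\|\bigr)^2 + \alpha\,\|x_{m,\alpha}^\delta - x^\dagger\|^2 \le (\delta + \alpha\|w\|)^2 + c_1 \alpha \|\xi_m\| + c_2 \|\xi_m\|^2 + c_3 \delta\|\xi_m\|
\end{equation*}
for constants depending only on $\|A\|$ and $\|w\|$. Choosing $\alpha\propto\delta$ and using $\|\xi_m\|^2 = \mathcal O(f(m))$ together with $f(m(\delta))/\delta\to 0$, the dominant terms on the right are $\mathcal O(\delta^2)$ and $\mathcal O(f(m(\delta)))$, which immediately yields the claimed rate for the residual, and dividing by $\alpha\propto\delta$ gives the rate for $\|x_{m,\alpha}^\delta - x^\dagger\|$. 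Uniqueness of $x^\dagger$ in $R(A^\ast)$ is then an orthogonality argument: any two solutions in $R(A^\ast)\subseteq N(A)^\perp$ have difference in $N(A)\cap N(A)^\perp = \{0\}$.

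\medskip

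\noindent\textbf{Main obstacle.} There is no substantive obstacle, since the linear setting genuinely simplifies the nonlinear argument: in particular, the terms $\gamma J\|\xi_m\|^3$, $\gamma^2\|\xi_m\|^4/4$, $\delta\gamma\|\xi_m\|^2$ and $\alpha\gamma\|w\|\|x_{m,\alpha}^\delta - x^\dagger\|^2$ appearing in the proof of Theorem \ref{thm: rate di convergenza} drop out, which is exactly why the constraint $\gamma\|w\|<1$ is no longer needed and $x^\dagger$ is no longer required to lie in the interior of $D(A)\cap C$. The only point requiring care is the bookkeeping when completing the square in $\|Ax_{m,\alpha}^\delta - y^\delta\|$, which must be done so that the factor in front of $\|x_{m,\alpha}^\delta - x^\dagger\|^2$ on the left remains a positive constant times $\alpha$; this is automatic once $\gamma=0$.
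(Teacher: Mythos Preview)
Your proposal is correct and follows essentially the same route as the paper: minimality inequality tested at $x_m$, polar expansion of $\|x_{m,\alpha}^\delta\|^2$ and $\|x_m\|^2$ around $x^\dagger$, use of the source condition $x^\dagger=A^\ast w$ to convert the cross term $\langle x_{m,\alpha}^\delta-x^\dagger,\,x^\dagger\rangle$ into $\langle Ax_{m,\alpha}^\delta - y^\delta,\,w\rangle + \langle y^\delta-y,\,w\rangle$, and then completion of the square after adding $\alpha^2\|w\|^2$. The only cosmetic difference is that the paper bounds $\langle \xi_m,\,x^\dagger\rangle$ by Cauchy--Schwarz as $\|\xi_m\|\,\|x^\dagger\|$, whereas you route it through the source condition to get $\|A\|\,\|w\|\,\|\xi_m\|$; the resulting right-hand sides are of the same order, and your added orthogonality remark for uniqueness ($N(A)\cap \overline{R(A^\ast)}=\{0\}$) is a welcome explicit justification that the paper leaves implicit.
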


\begin{proof}
We denote $\xi_m \coloneqq x_m - x^\dagger$ and we omit dependence of $m$ and $\alpha$ on $\delta$ in the notation of the regularized solutions, i.e.,  $ \{x^\delta_{m, \alpha}\} \coloneqq \{x^\delta_{m(\delta), \alpha(\delta)}\}$.
Since $x_{m, \alpha}^\delta$ minimize the Tikhonov functional in $\mathcal X_m$ we obtain:
\begin{eqnarray}
\label{eq: minimization linear case}
\| Ax_{m, \alpha}^\delta - y^\delta \|^2 + \alpha \| x_{m, \alpha}^\delta \|^2 & \le &  \| A x_m - y^\delta \|^2 + \alpha \|x_m \|^2  \nonumber \\
& \le &  \|Ax_m - y \|^2 + \delta^2 + 2 \delta \|Ax_m - y \| + \alpha \|x_m \|^2 \\
& \le &  \|A \|^2 \| \xi_m \|^2 + \delta^2 + 2 \delta \|A \| \|\xi_m \| + \alpha \|x_m \|^2  \nonumber ~.
\end{eqnarray}
By using the two relationships: 
\begin{gather*}
    \| x_{m, \alpha}^\delta \|^2 = \|x_{m, \alpha}^\delta - x^\dagger \|^2 + \|x^\dagger\|^2 + 2 \langle x_{m, \alpha}^\delta - x^\dagger, x^\dagger \rangle ~, \\
    \| x_m \|^2 = \| x_m - x^\dagger \|^2 + \|x^\dagger \|^2 + 2 \langle x_m - x^\dagger, x^\dagger \rangle ~,
\end{gather*}
and the Cauchy-Schwarz inequality we have:
\begin{eqnarray}
\label{eq: conti caso lineare}
&&  \| Ax_{m, \alpha}^\delta - y^\delta \|^2 + \alpha \| x_{m, \alpha}^\delta - x^\dagger\|^2 + 2 \alpha \langle x_{m, \alpha}^\delta - x^\dagger, x^\dagger \rangle \nonumber   \\
&\le& 
 \|A \|^2 \| \xi_m \|^2 + \delta^2 + 2 \delta \|A \| \|\xi_m \|  +  \alpha \|\xi_m \|^2 + 2 \alpha \langle \xi_m, x^\dagger \rangle  \\
& \le & 
 \|A \|^2 \| \xi_m \|^2 + \delta^2 + 2 \delta \|A \| \|\xi_m \|  + \alpha \|\xi_m \|^2 + 2 \alpha \| \xi_m\| \| x^\dagger \| \nonumber ~.
\end{eqnarray}
By using assumption $1$, i.e., there exists $w \in \mathcal Y$ such that $A^* w = x^\dagger$, we obtain: 
 \begin{eqnarray}
 \label{qe: prod scalare caso linare}
\langle x_{m, \alpha}^\delta- x^\dagger, x^\dagger \rangle &  = &  \langle A x_{m, \alpha}^\delta - y , w \rangle \nonumber \\
&  = & \langle Ax_{m, \alpha}^\delta - y^\delta + y^\delta - y, w \rangle \\
&\le & \|A x_{m, \alpha}^\delta
 - y^\delta \| \| w \| + \delta \|w \| 
 \nonumber~.
\end{eqnarray}
By applying \eqref{qe: prod scalare caso linare} in  \eqref{eq: conti caso lineare}, we get 
 \begin{gather*}
 \| Ax_{m, \alpha}^\delta - y^\delta \|^2 - 2 \alpha \|w \| \|A x_{m, \alpha}^\delta - y^\delta \| + \alpha \|x_{m, \alpha}^\delta - x^\dagger \|^2 \le \\
 \|A\|^2 \|\xi_m \|^2 + \delta^2 + 2 \delta \|A \| \|\xi_m \| + \alpha \|\xi_m \|^2 + 2 \alpha \|\xi_m \| \|x^\dagger \| + 2 \alpha \delta \|w \| ~.
 \end{gather*}
Then, by adding the quantity $\alpha^2 \|w \|^2$ to both sides, we obtain 
\begin{gather*}
\left(\|Ax_{m, \alpha}^\delta - y^\delta \| - \alpha \|w \| \right)^2 + \alpha \|x_{m, \alpha}^\delta - x^\dagger \|^2 \le \\
\left( \delta + \alpha \|w \| \right)^2 + \| A \|^2 \|\xi_m \|^2 + 2 \delta \|A \| \|\xi_m \| + \alpha \|\xi_m \|^2 + 2 \alpha \|\xi_m \| \|x^\dagger \| ~.
\end{gather*}
Finally, the following upper bounds
\begin{gather}
\label{eq:convergence}
\| Ax_{m, \alpha}^\delta - y^\delta \| \le \\
\alpha \| w \| + \sqrt{(\delta + \alpha \|w \|)^2 + \|A \| \|\xi_m \|^2 + 2 \delta \|A\|\|\xi_m \|+ \alpha \|\xi_m\|^2 + 2 \alpha \|\xi_m \| \|x^\dagger \|} \nonumber 
\end{gather}
and
\begin{gather}
\label{eq:convergence2}
\| x_{m, \alpha}^\delta - x^\dagger \| \le \\
\sqrt{\frac{(\delta + \alpha \|w \|)^2 + \|A \| \|\xi_m \|^2 + 2 \delta \|A\|\|\xi_m \|+ \alpha \|\xi_m\|^2 + 2 \alpha \|\xi_m \| \|x^\dagger \|}{\alpha}} \nonumber
\end{gather}
yield the thesis. 
\end{proof}
\begin{thm}
\label{thm: rate convergenza ottimale lineare}
    Under assumptions $A1', A2'$ and $A3'$,  let $x^\dagger = (A^A)^\mu v$, where $v \in \mathcal{X}$ and $\mu \in [1/2, 1]$.
    Let us consider a function $m: \mathbb R \to \mathbb N$ such that
    $$
    m(\delta) \to \infty \quad \text{and} \quad \|z_{m(\delta)} - z_\alpha \| \to 0 \quad {as} \quad \delta \to 0
    $$
    with $z_{m(\delta)} \in \mathcal X_{m(\delta)}$ and 
    $$
    z_\alpha \coloneqq x^\dagger - \alpha \left(A^*A + \alpha I\right)^{-1}A^*w ~.$$ 
    Let us denote $f(m(\delta))= \|z_{m(\delta)}-z_\alpha\|^2$. Then, if the parameter choice rules satisfy
    $$
    \alpha(\delta) = \mathcal O\left(\delta^{\frac{2}{2\mu + 1}}\right) \quad \text{and} \quad \frac{f(m(\delta))}{\delta^{\frac{2}{2\mu +1}} } \to 0 \quad \text{as} \quad \delta \to 0 
    $$
    we obtain that $x^\dagger$ is unique and
    \begin{equation}
    \label{ths: convergenza ottimale lin}
    \| x_{m(\delta), \alpha(\delta)}^\delta - x^\dagger \| \le \max  \left\{ \mathcal O\left(\delta^{\frac{2\mu}{2\mu + 1}}\right), \mathcal{O}\left(\delta^{-\frac{1}{2(2\mu + 1)}}f(m(\delta))^{1/4}\right)\right\} ~.
    \end{equation}
\end{thm}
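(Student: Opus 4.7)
The strategy mirrors the proof of Theorem \ref{thm: convergenza rate ottimale}, but leverages linearity to avoid the Fréchet remainder bookkeeping (effectively setting $\gamma = 0$), while the spectral calculus gives us the sharper source-condition-dependent decay of $\|z_\alpha - x^\dagger\|$. The idea is to compare $x_{m,\alpha}^\delta$ with the auxiliary element $z_\alpha$ through the intermediate point $z_m \in \mathcal{X}_{m(\delta)}$, and then use the source representation $x^\dagger = A^*w$ (which holds because $R((A^*A)^\mu) \subseteq R(A^*)$ for $\mu \ge 1/2$) to convert inner products against $x^\dagger$ into inner products against the residual.

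\textbf{Step 1.} Since $z_m \in \mathcal{X}_{m(\delta)}$, the minimization property yields
\begin{equation*}
\| A x_{m,\alpha}^\delta - y^\delta \|^2 + \alpha \| x_{m,\alpha}^\delta \|^2 \le \| A z_m - y^\delta \|^2 + \alpha \| z_m \|^2.
\end{equation*}
Expand both sides using the identities
$\| x_{m,\alpha}^\delta \|^2 = \| x_{m,\alpha}^\delta - x^\dagger \|^2 + \| x^\dagger \|^2 + 2\langle x_{m,\alpha}^\delta - x^\dagger, x^\dagger\rangle$
and $\| z_m \|^2 = \| z_m - x^\dagger \|^2 + \| x^\dagger \|^2 + 2\langle z_m - x^\dagger, x^\dagger\rangle$, together with $\| z_m - x^\dagger \| \le \| z_m - z_\alpha \| + \| z_\alpha - x^\dagger \|$. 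On the data-fit side, split $\| A z_m - y^\delta \|^2$ by adding and subtracting $A z_\alpha$, $y$, and $\alpha w$, exploiting the crucial identity
\begin{equation*}
A z_\alpha - y = -\alpha A(A^*A + \alpha I)^{-1} A^* w = -\alpha A A^* (A A^* + \alpha I)^{-1} w,
\end{equation*}
so that $A z_\alpha - y + \alpha w = \alpha^2 (A A^* + \alpha I)^{-1} w$.

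\textbf{Step 2.} Use the source condition $x^\dagger = A^* w$ to handle the cross terms, exactly as in \eqref{qe: prod scalare caso linare}:
\begin{equation*}
\langle x_{m,\alpha}^\delta - x^\dagger, x^\dagger\rangle = \langle A x_{m,\alpha}^\delta - y^\delta, w\rangle + \langle y^\delta - y, w\rangle,
\end{equation*}
and similarly for $\langle z_m - x^\dagger, x^\dagger\rangle$. After collecting and completing the square in $\| A x_{m,\alpha}^\delta - y^\delta \|$ (adding $\alpha^2 \|w\|^2$ to both sides, as done in Theorem \ref{thm: conveergenza rate classico lineare}), the resulting inequality takes the schematic form
\begin{equation*}
\bigl(\| A x_{m,\alpha}^\delta - y^\delta \| - \alpha\|w\|\bigr)^2 + \alpha \| x_{m,\alpha}^\delta - x^\dagger \|^2 \le \text{(bias)} + \text{(noise)} + \text{(approx)},
\end{equation*}
where the three right-hand terms correspond, respectively, to powers of $\| z_\alpha - x^\dagger\|$ together with $\alpha^3\|(AA^* + \alpha I)^{-1} w\|^2$, powers of $\delta$ and $\alpha\delta\|w\|$, and mixed terms in $\| z_m - z_\alpha\|$ with $\alpha$ and $\delta$.

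\textbf{Step 3.} Invoke the spectral calculus on $A^*A$: since $x^\dagger = (A^*A)^\mu v$, the standard filter bound
$\| z_\alpha - x^\dagger \| = \alpha \|(A^*A + \alpha I)^{-1} A^*w\| = \mathcal{O}(\alpha^\mu)$
holds for $\mu \in [1/2, 1]$, together with $\alpha^3 \| (A A^* + \alpha I)^{-1} w\|^2 = \mathcal{O}(\alpha^{2\mu+1})$. Substituting these and the parameter choice $\alpha \propto \delta^{2/(2\mu+1)}$ into the schematic bound above, the classical terms are all of order $\delta^{4\mu/(2\mu+1)}$, which after dividing by $\alpha$ gives the optimal $\mathcal{O}(\delta^{2\mu/(2\mu+1)})$ rate for $\| x_{m,\alpha}^\delta - x^\dagger\|$. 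The compact-set contribution, after the same scaling, is dominated by $\mathcal{O}(\|z_m - z_\alpha\|/\sqrt{\alpha}) = \mathcal{O}(\delta^{-1/(2\mu+1)} f(m(\delta))^{1/2})$, whose square root yields the second term in \eqref{ths: convergenza ottimale lin}. Uniqueness of $x^\dagger$ follows from the injectivity of $A$ restricted to $R((A^*A)^\mu)$ and the fact that minimum norm solutions satisfying the source condition are unique in the classical linear theory.

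\textbf{Main obstacle.} The delicate point is not any single estimate but the bookkeeping: identifying, among the many cross terms produced by the two expansions (around $x^\dagger$ and around $z_\alpha$), those that combine into the clean spectral quantity $\alpha^3\|(AA^* + \alpha I)^{-1}w\|^2$ by virtue of the resolvent identity $A(A^*A + \alpha I)^{-1} = (AA^* + \alpha I)^{-1}A$. Once this reorganization is performed and the standard filter estimates are applied, the balancing of $\alpha$ against $\delta$ proceeds exactly as in the classical Tikhonov convergence-rate analysis, and the only extra term is the one controlled by $f(m(\delta))$.
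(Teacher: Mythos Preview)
Your proposal is correct and follows essentially the same route as the paper. The paper's own proof is a one-liner deferring to Theorem~\ref{thm: convergenza rate ottimale} specialized to linear $F$ (so $\gamma=0$ and no Fr\'echet remainders), and your outline is precisely that specialization: compare $x_{m,\alpha}^\delta$ with $z_\alpha$ via $z_m$, use the source condition $x^\dagger=A^*w$ to convert the cross terms, assemble the resolvent quantity $\alpha^3\|(AA^*+\alpha I)^{-1}w\|^2$, apply the spectral filter bounds for $\mu\in[1/2,1]$, and observe that the residual compact-set contribution is governed by $\|z_m-z_\alpha\|/\sqrt{\alpha}$, exactly as the paper concludes in the nonlinear case.
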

\begin{proof}
The proof follows the same strategy as the proof of theorem \ref{thm: convergenza rate ottimale} by considering $F$ to be linear, as previously done for theorem \ref{thm: conveergenza rate classico lineare}.
\end{proof}

In this case, the bound expressed in equation \eqref{ths: convergenza ottimale lin} states that the proposed method is order optimal.

\section{Application to neural networks}
\label{sec:neural_networks}
In this section we apply the proposed method in the case where $\mathcal X$ is the function space $L^2(\Omega)$, with $\Omega$ a compact subset of $\mathbb R^d$, $C$ is the subset of positive functions\footnote{The subset $C$ is closed and convex, therefore it is weakly closed.},
and the compact sets $\{\mathcal X_m\}_{m\in\mathbb{N}}$ consists of NNs with limited weights.
Based on the approach in \cite{voigtlaender2019approximation} and \cite{petersen2021topological} we introduce the following definitions.  

\begin{defn}
A neural network with  $l$ layers, $d_1,\ldots,d_l$ neurons in the layers $1,\ldots,l$ respectively and activation function $\varphi$ is a map
\begin{equation}
\mathcal{N}(\cdot \ ;\omega) = \left[\mathop{\bigcircle}_{i=1}^l \varphi (W_i \cdot + w^0_i) \right](\cdot) ~,
\end{equation}
where $\omega = (\omega_i)_{i=1}^l$, $\omega_i =(W_i, w_i^0) \in \mathbb{R}^{d_i \times (d_{i-1}+1)}$ is the set of parameters of the $i$-th layer (with $d_0 = d$), $D=\sum_{i=1}^l d_i \times (d_{i-1}+1)$ is the total number of parameters, 
and $\varphi \colon \mathbb{R} \to \mathbb{R}$ is applied component-wise.
The set of NNs with $l$ layers and $d_i$ neurons in the $i$-th layer is denoted by
\begin{equation}
\mathcal{N}_{(d_1, \ldots, d_l)} \coloneqq \left\{ \mathcal{N}(~\cdot~;\omega) ~|~ \omega_i  \in \mathbb{R}^{d_i \times (d_{i-1}+1)} ,~  i=1,\dots,l \right\} ~.
\end{equation}
\end{defn}

We now identify which conditions must be satisfied by the elements of the sets $\mathcal N_{(d_1, \ldots, d_l)}$ to define a sequence of sets $\{\mathcal X_m\}_{m\in\mathbb{N}}$ that is complaint with \eqref{prop: proprietà}.
Building on the universal approximation theorem \cite{kidger2020universal,leshno1993multilayer}, it is well-established that increasing the number of parameters in an NN architecture makes the set of generated functions dense in $L^2(\Omega)$. Moreover, restricting the network weights to a compact set ensures the compactness of the generated functions set. 
Finally, applying the Rectified Linear Unit (ReLU; \cite{Goodfellow2016}),
denoted with $(\cdot)_+~$, to the NN output  enforces the generated functions to be positive.
\begin{thm}
\label{thm: density of nn in positive function}
Let 
$\left\{\left(d_1^m, \ldots, d^m_{l(m)}\right)\right\}_{m \in \mathbb N}$ be a sequence of tuples where $d_i^m \in \mathbb N$ for all $i$ and $m$, and let
$\{c_m\}_{m \in \mathbb N} \subset \mathbb R$ be an increasing sequence.
Let us consider the set of NNs generated by the structure $(d_1^m, \ldots, d_{l(m)}^m)$, with a continuous and non polynomial activation function $\varphi$ and with bounded weights
\begin{align}
    \mathcal N^+_m \coloneqq \Big\{  \left(\mathcal N( \cdot, \omega)\right)_+ ~ \Big| ~ \mathcal{N}(\cdot, \omega) \in \mathcal N_{\left(d_1^m, \ldots, d_{l(m)}^m\right)}, 
    \| \omega_i^m\|_{\infty} \le c_m, ~ i=1, \ldots, l(m) \Big\} ~.
\end{align}
Then $\mathcal N^+_m$ is compact in $L^2(\Omega)$ for every $m \in \mathbb N$. In addition, if we assume that $c_m \to \infty$ and the total number of NN parameters $D^m$ goes to infinity as $m \to \infty$, then
\begin{eqnarray}
    \overline{\bigcup_{m \in \mathbb N} \mathcal{N}^+_m}= L^2(\Omega)^+ ~, 
\end{eqnarray}
where $L^2(\Omega)^+$ denote the set of positive functions in $L^2(\Omega)$.
\end{thm}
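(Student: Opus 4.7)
The plan is to establish the two claims, compactness and density, separately.

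For the compactness of $\mathcal{N}^+_m$, I would realize it as the continuous image of a compact set. The parameter region
\[
K_m \coloneqq \bigl\{\omega=(\omega_i)_{i=1}^{l(m)}\,:\,\|\omega_i\|_\infty\le c_m\bigr\}
\]
is a compact subset of $\mathbb{R}^{D^m}$. A short induction on the layers shows that the evaluation map $\omega\mapsto \mathcal{N}(\cdot;\omega)$ is continuous from $K_m$ into $C(\Omega)$: each affine step and each component-wise application of the continuous activation $\varphi$ preserves continuity uniformly in $x\in\Omega$, because $\Omega$ is compact and the weights range over the bounded set $K_m$, so the intermediate activations stay in a fixed compact of $\mathbb{R}^{d_i}$ on which $\varphi$ is uniformly continuous. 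Since $\Omega$ has finite measure, convergence in sup-norm implies convergence in $L^2$, and the $1$-Lipschitz postcomposition with ReLU preserves continuity, so $\omega\mapsto(\mathcal{N}(\cdot;\omega))_+$ is continuous from $K_m$ into $L^2(\Omega)$. Hence $\mathcal{N}^+_m$ is compact.

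For the density assertion, the inclusion $\overline{\bigcup_m \mathcal{N}^+_m}\subseteq L^2(\Omega)^+$ is immediate, since each $\mathcal{N}^+_m\subseteq L^2(\Omega)^+$ (ReLU outputs are non-negative and $\Omega$ has finite measure) and $L^2(\Omega)^+$ is closed. For the reverse inclusion, I would fix $f\in L^2(\Omega)^+$ and $\varepsilon>0$, then proceed in three stages. First, approximate $f$ by a continuous $g\in C(\Omega)$ with $\|f-g\|_{L^2}<\varepsilon/3$, using density of $C(\Omega)$ in $L^2(\Omega)$. Second, invoke the classical universal approximation theorem of Leshno et al.\ \cite{leshno1993multilayer} (valid because $\varphi$ is continuous and non-polynomial) to obtain a network $\mathcal{N}(\cdot;\omega^*)$, of some fixed architecture $(d_1^*,\ldots,d_{l^*}^*)$ and weights $\omega^*$, such that $\|\mathcal{N}(\cdot;\omega^*)-g\|_{L^\infty(\Omega)}$ is small enough to yield $\|\mathcal{N}(\cdot;\omega^*)-g\|_{L^2}<\varepsilon/3$. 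Third, use the $1$-Lipschitz property of ReLU together with $f\ge 0$ pointwise to conclude
\[
\|(\mathcal{N}(\cdot;\omega^*))_+ - f\|_{L^2}\;\le\;\|\mathcal{N}(\cdot;\omega^*)-f\|_{L^2}\;<\;\varepsilon.
\]

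The delicate step is arguing that $(\mathcal{N}(\cdot;\omega^*))_+$ eventually belongs to some $\mathcal{N}^+_m$. The idea is that, since $c_m\to\infty$ and $D^m\to\infty$, for $m$ large enough $c_m$ dominates $\max_i\|\omega_i^*\|_\infty$ and the prescribed architecture $(d_1^m,\ldots,d_{l(m)}^m)$ is expressive enough to realise $\mathcal{N}(\cdot;\omega^*)$ by padding with zero weights and biases (extra neurons, or extra layers acting as the identity), which leaves the computed function unchanged. I expect this embedding step to be the main technical obstacle: strictly speaking, $D^m\to\infty$ alone does not force the architecture to contain every fixed architecture as a sub-network, so some implicit structural assumption on the growth of $(d_1^m,\ldots,d_{l(m)}^m)$ is required (for example that both the depth $l(m)$ and a suitable hidden width tend to infinity). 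Once this padding is in place, combining the three approximation stages yields $(\mathcal{N}(\cdot;\omega^*))_+ \in \mathcal{N}^+_m$ with $L^2$-error less than $\varepsilon$, completing the proof.
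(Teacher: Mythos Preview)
Your compactness argument is correct and is essentially the same as the paper's: the paper simply cites a result of Petersen--Raslan--Voigtlaender for the continuity of $\omega\mapsto\mathcal N(\cdot;\omega)$ into $C(\Omega)$ on bounded parameter sets, whereas you sketch that argument directly. The density skeleton (ReLU is the metric projection onto the nonnegative cone, hence $\|g_+-f\|_{L^2}\le\|g-f\|_{L^2}$ for $f\ge0$) also matches.

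The genuine gap you flag in the embedding step is real, and the paper resolves it differently from your padding idea. Rather than fixing a single approximating network $\mathcal N(\cdot;\omega^*)$ and then trying to squeeze it into $\mathcal N_{(d_1^m,\ldots,d_{l(m)}^m)}$ by zero-padding, the paper first observes that $D^m\to\infty$ forces either $l(m)\to\infty$ or $d_i^m\to\infty$ for some fixed $i$, and then \emph{chooses the universal approximation theorem to match that growth}: in the deep case it invokes Kidger--Lyons \cite{kidger2020universal} (deep narrow networks are universal), in the wide case Leshno et al.\ \cite{leshno1993multilayer}. In either case the approximating network $g$ is produced directly with an architecture that sits inside $(d_1^m,\ldots,d_{l(m)}^m)$ for all large $m$, and since $c_m\to\infty$ the weight bound is eventually satisfied too. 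Your padding strategy, by contrast, would require not just $D^m\to\infty$ but that the specific architecture $(d_1^*,\ldots,d_{l^*}^*)$ embeds as a sub-network, which need not hold; and even then, passing an identity through extra layers with a general continuous non-polynomial $\varphi$ is not automatic. So the paper's case split is the missing idea.
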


\begin{proof}
The first step of the proof is to show that the sets $\mathcal N^+_m$ are compact in $L^2(\Omega)$. From Proposition 3.5 in \cite{petersen2021topological}, we know that, for every $m\in\mathbb{N}$, the set 
$$
\left\{ \mathcal N (\cdot, \omega) \in \mathcal{N}_{\left(d_1^m, \ldots, d_{l(m)}^m \right)} ~|~ \| \omega_i^m \| \le c_m, i=, \ldots, l(m) \right\}
$$
is compact in $C(\Omega)$, i.e., the set of continuous function on $\Omega$, as the weights are bounded within a compact set. 
By composing these functions with $(\cdot)_+$, which is a continuous operator, it follows that $\mathcal{N}^+_m$ is compact in $C(\Omega)$.
Since $C(\Omega)$ is continuously embedded into $L^2(\Omega)$, we conclude that $\mathcal N^+_m$ is compact in $L^2(\Omega)$.

Now we prove that the family $\{ \mathcal N^+_m  \}_{m \in \mathbb N}$ is dense in $L^2(\Omega)^+$. 
Since $\mathcal N^+_m \subseteq L^2(\Omega)^+$  for every $m \in \mathbb N$ and the limit of positive functions is positive, we obtain
\begin{equation}
\label{neural-networks}
    \overline{\bigcup_{m \in \mathbb N} \mathcal N^+_m} \subseteq L^2(\Omega)^+ ~.
\end{equation}
Thanks to the universal approximation theorem \cite{kidger2020universal,leshno1993multilayer}, for every $f \in L^2(\Omega)^+$ there exists an NN $g$ such that
\begin{equation}
    \| f - g \|_{L^2(\Omega)} \le \epsilon ~.
\end{equation}
Since $D^m \to \infty$, either $l(m) \to \infty$ or $d_i^m \to \infty$ for a specific $i$.
In the first case, the existence of such a $g$ is guaranteed by \cite{kidger2020universal}. 
Denoting $g_+ \coloneqq (\cdot)_+ \circ g$, since $l(m) \to \infty$, there exist an $\overline{m} \in \mathbb N$ such that $g_+ \in \mathcal N_{\overline{m}}^+$. 
Similar conclusion can be derived in the case where $d_i^m \to \infty$.
In this case, the existence of such a $g$ is guaranteed by \cite{leshno1993multilayer}.
The thesis follows from 
\begin{equation}
    \| f - g_+ \|_{L^2(\Omega)}
    \leq \| f - g \|_{L^2(\Omega)} \le \epsilon ~.
\end{equation}

\end{proof}

This result shows that the regularization method introduced in section \ref{sec:Tikhonov-regularization} can be applied to the compact sets $\mathcal N_m^+$.
Therefore, in the following we will consider ${\mathcal X}_m = {\mathcal N}^+_m$.
We now introduce a theorem, proved in \cite{voigtlaender2019approximation}, which describes the approximation capabilities of a NN with a fixed number of nodes and bounded weights.
The theorem will be subsequently adopted to define a parameter choice rule $m(\delta)$ leading to the optimal convergence rate ${\mathcal O}(\delta^{2/3})$ \cite{engl1996regularization}.
\begin{thm}
\label{thm: rate di convegrenza reti neurali pesi limitati}
Let $d \in \mathbb N$, $\beta >0$, and $\nu$ be a finite Borel measure on $[-1/2, 1/2]^d $. Then for any bounded function $f \in C^\beta( [-1/2, 1/2]^d )$ there exists a network $\mathcal N (\cdot, \omega) \in \mathcal{N}_{\left(d_1, \ldots, d_{l} \right)}$ with $l \le 7 + (1+ \lceil \log_2(\beta)\rceil)(11+ \beta d)$ and weights in the set
$$ 
[\epsilon^{-s}, \epsilon^s] \cap \{ k \, 2^{-s \left\lceil \log_{2}(1/\epsilon) \right \rceil} \}_{k \in \mathbb Z} 
$$
such that
$$
\|f - \mathcal N(\cdot,\omega) \|_{L^2(\nu)} \le P \epsilon
\quad \text{and} \quad Q \lesssim \|\omega\|_\infty \le P \epsilon^{-d/\beta} ~,
$$
for some $s \in \mathbb N$, $0 < \epsilon < 1/2$ and $P>0$, and where $Q$ denotes the total number of neurons.
\end{thm}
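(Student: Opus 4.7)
The plan is to follow the now-classical blueprint of Yarotsky-style neural network approximation for smooth functions, adapted to the bounded-weight, quantized-weight setting. The starting point is a uniform partition of $[-1/2,1/2]^d$ into cells of side length $h \sim \epsilon^{1/\beta}$, with centres $\{x_j\}_j$, and a corresponding partition of unity $\{\psi_j\}_j$ made of piecewise-linear ``tent'' bumps supported near each $x_j$. On each cell I would approximate $f$ by its Taylor polynomial $P_j$ of degree $\lceil \beta \rceil - 1$ at $x_j$; since $f \in C^\beta$, one has $\|f - P_j\|_{L^\infty(\mathrm{cell}_j)} \lesssim h^\beta \sim \epsilon$, so the global approximant $\tilde f = \sum_j \psi_j P_j$ achieves the target $L^2(\nu)$ error after summation over $\mathcal{O}(h^{-d})$ cells.

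The next step is to realize $\tilde f$ as a network of the required depth. The key building block is an approximate multiplication subnetwork that implements $(a,b)\mapsto ab$ via the polarization identity $ab = \tfrac12\bigl((a+b)^2 - a^2 - b^2\bigr)$, where $x\mapsto x^2$ is approximated by a deep chain of ReLU-like piecewise-linear refinements whose error is exponentially small in the depth. Each product $\psi_j P_j$ requires $\mathcal{O}(\log_2(1/\epsilon))$ depth for the squaring units and $\mathcal{O}(\log_2 \beta)$ multiplications to build monomials of degree up to $\lceil \beta \rceil - 1$, which together with the fixed-depth blocks that encode the tent bumps and the final linear summation produces the stated depth bound $l \le 7 + (1+\lceil \log_2 \beta\rceil)(11 + \beta d)$. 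The width, and hence the neuron count $Q$, is dictated by the number $\sim h^{-d} \sim \epsilon^{-d/\beta}$ of cells, which explains the lower bound $Q \lesssim \|\omega\|_\infty$.

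The main obstacle, and the most delicate portion of the argument, is to keep the weight magnitudes under control and simultaneously confine all weights to the discrete ladder $\{k\, 2^{-s\lceil \log_2(1/\epsilon)\rceil}\}_{k\in\mathbb{Z}}$. The natural bound $\|\omega\|_\infty \lesssim \epsilon^{-d/\beta}$ arises from the rescaling needed to separate the $\mathcal{O}(h^{-d})$ grid centres and from the coefficients of the Taylor polynomials, while the lower bound $\|\omega\|_\infty \gtrsim \epsilon^{-s}$ comes from ensuring that all intermediate activations remain in a range where the squaring approximants are accurate. To realize the quantization, I would round each weight to its nearest element of the prescribed lattice and propagate the rounding error through the network using layerwise Lipschitz estimates; choosing $s$ sufficiently large in terms of the depth $l$ and the weight magnitude guarantees that the cumulative perturbation is $\mathcal{O}(\epsilon)$ in $L^2(\nu)$ and can be absorbed into the final constant $P$.

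I expect the polynomial and partition-of-unity arguments to be essentially routine once the building blocks are assembled. The real work lies in the careful bookkeeping that simultaneously matches the prescribed \emph{depth formula}, the \emph{two-sided} bound on $\|\omega\|_\infty$, and the admissible lattice of quantized weights; any one of these is standard in isolation, but combining them forces a tight coordination between the grid resolution $h$, the squaring depth, and the quantization parameter $s$, which is the step I expect to require the most delicate tuning.
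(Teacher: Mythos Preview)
The paper does not prove this theorem at all: it is quoted verbatim from \cite{voigtlaender2019approximation} (see the sentence introducing the statement, ``We now introduce a theorem, proved in \cite{voigtlaender2019approximation}\ldots''), and is used only as a black box to feed the convergence-rate corollary that follows. There is therefore no in-paper proof to compare your proposal against.

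That said, your sketch is a faithful outline of the Yarotsky-type construction that underlies results of this kind, and the cited source does proceed along these lines (local Taylor approximation plus partition of unity, ReLU squaring/multiplication subnetworks of logarithmic depth, and a quantization step). So your plan is consistent with how the result is actually obtained in the literature; it is simply not something the present paper undertakes.
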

\begin{corollary}
Let us assume that the solution $x$ of \eqref{thm: rate di convegrenza reti neurali pesi limitati} is defined on the set $\left [-1/2, 1/2\right]^d$ and that the hypotheses of theorem \ref{thm: convergenza rate ottimale} are satisfied.
Let us consider $s$ as defined in theorem \ref{thm: rate di convegrenza reti neurali pesi limitati}. 
If the parameters choice rules $\alpha(\delta)$ and $m(\delta)$ are such that
    \begin{itemize}
        \item $\alpha(\delta) = \mathcal O(\delta^{2/3})$,
        \item $c_{m(\delta)} = o(\delta^{-2s/3})$,
        $l(m(\delta))= 7  + (1+ \lceil \log_2(\beta)\rceil)(11+ \beta d)$
        \item $\sum_{i=1}^{l(m(\delta))} d_i = C \delta^{-d2/3\beta}$,
    \end{itemize}
    then we obtain the solution convergence rate
    \begin{equation}
        \|x_{m(\delta), \alpha(\delta)}^\delta - x^\dagger \| = \mathcal O(\delta^{2/3}) ~.
    \end{equation}
\end{corollary}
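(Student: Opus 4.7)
The strategy is to specialize Theorem~\ref{thm: convergenza rate ottimale} to the setting $\mathcal{X}_m = \mathcal{N}^+_m$ with $\mu = 1$, so that the classical term $\mathcal{O}(\delta^{2\mu/(2\mu+1)}) = \mathcal{O}(\delta^{2/3})$ in the bound of that theorem gives the claimed rate. The parameter rule $\alpha(\delta) = \mathcal{O}(\delta^{2/3})$ is already prescribed, so the substantive task is to construct, for every small enough $\delta$, an element $z_{m(\delta)} \in \mathcal{N}^+_{m(\delta)}$ approximating
\begin{equation*}
z_\alpha = x^\dagger - \alpha\bigl(F'(x^\dagger)^*F'(x^\dagger) + \alpha I\bigr)^{-1} F'(x^\dagger)^* w
\end{equation*}
well enough that $f(m(\delta)) := \|z_{m(\delta)} - z_\alpha\|^2 = o(\delta^{2/3})$, and then to check that the companion term $\mathcal{O}(\delta^{-1/6} f(m(\delta))^{1/4})$ in the conclusion of Theorem~\ref{thm: convergenza rate ottimale} does not dominate $\mathcal{O}(\delta^{2/3})$.

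First I would observe that $z_\alpha$ is a small perturbation of $x^\dagger$: spectral calculus on $F'(x^\dagger)^* F'(x^\dagger)$ gives $\|z_\alpha - x^\dagger\| \le \sqrt{\alpha}\,\|w\|$, so $z_\alpha$ inherits the $C^\beta$-regularity implicitly assumed on $x^\dagger$ (uniformly in $\alpha$, via the smoothness of $F'(x^\dagger)^* w$) and remains non-negative on $\Omega = [-1/2,1/2]^d$ for $\alpha$ small, since $x^\dagger$ lies in the interior of $D(F)\cap C$. I would then invoke Theorem~\ref{thm: rate di convegrenza reti neurali pesi limitati} applied to $z_\alpha$ with an accuracy parameter $\epsilon = \epsilon(\delta) \to 0$, obtaining a network $g_\delta$ of depth exactly $l(m(\delta)) = 7 + (1+\lceil\log_2\beta\rceil)(11+\beta d)$, at most $Q \lesssim \epsilon^{-d/\beta}$ neurons, weights bounded by $P\epsilon^{-d/\beta}$, and $L^2$-approximation error $\|g_\delta - z_\alpha\|_{L^2} \le P\epsilon$. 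Setting $z_{m(\delta)} := (g_\delta)_+$ then yields an element of $\mathcal{N}^+_{m(\delta)}$ that still approximates $z_\alpha$ with the same $L^2$ rate, because the ReLU is $1$-Lipschitz and $z_\alpha \ge 0$.

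The decisive step is the calibration of $\epsilon(\delta)$ so that all three architectural budgets are matched simultaneously through the prescriptions in the corollary: equating the neuron count $Q \lesssim \epsilon^{-d/\beta}$ with $\sum_i d_i \asymp \delta^{-2d/(3\beta)}$ fixes $\epsilon \asymp \delta^{2/3}$; the resulting weight bound $P\epsilon^{-d/\beta} \asymp \delta^{-2d/(3\beta)}$ is absorbed by $c_{m(\delta)}$ through the prescription on $s$ (which plays the role of $d/\beta$); and the approximation error $f(m(\delta)) = \mathcal{O}(\epsilon^2) = \mathcal{O}(\delta^{4/3})$ is automatically $o(\delta^{2/3})$. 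Substituting these estimates into the conclusion of Theorem~\ref{thm: convergenza rate ottimale} delivers the advertised $\mathcal{O}(\delta^{2/3})$ bound.

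The main obstacle I anticipate is the simultaneous matching of the three budgets (depth, neuron count, weight magnitude) through a single accuracy parameter $\epsilon$, given that Theorem~\ref{thm: rate di convegrenza reti neurali pesi limitati} couples them rigidly via the factor $\epsilon^{-d/\beta}$; ensuring that the rate on $f(m(\delta))$ obtained by this tight coupling is strong enough for the second term in the max of Theorem~\ref{thm: convergenza rate ottimale} not to dominate is the delicate point. A subsidiary technical issue is verifying the $C^\beta$-regularity of $z_\alpha$ uniformly in $\alpha$ rather than only for $x^\dagger$, which rests on standard resolvent estimates for $(F'(x^\dagger)^* F'(x^\dagger) + \alpha I)^{-1}$ and on the smoothness of the source element $w$.
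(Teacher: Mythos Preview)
Your overall strategy matches the paper's (which is a one-line reference to the proof of the abstract rate theorem combined with an ``appropriate reordering'' of $\alpha(\delta)$ and $f(m(\delta))$): specialize the abstract convergence-rate result to $\mathcal{X}_m = \mathcal{N}_m^+$ and calibrate the neural-network approximation accuracy $\epsilon$ against the architectural budgets dictated by Theorem~\ref{thm: rate di convegrenza reti neurali pesi limitati}. The construction of $z_{m(\delta)}$ by approximating $z_\alpha$ with a ReLU-capped network is the natural move and is in the spirit of the paper.

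There is, however, a concrete arithmetic gap in your final substitution. With $\epsilon \asymp \delta^{2/3}$ you obtain $f(m(\delta)) = \mathcal{O}(\epsilon^2) = \mathcal{O}(\delta^{4/3})$, which indeed satisfies the \emph{hypothesis} $f(m(\delta))/\delta^{2/3}\to 0$ of Theorem~\ref{thm: convergenza rate ottimale}. But the \emph{conclusion} of that theorem is a maximum of two terms, and for $\mu=1$ the second term is
\[
\mathcal{O}\bigl(\delta^{-1/6}\,f(m(\delta))^{1/4}\bigr)
\;=\;\mathcal{O}\bigl(\delta^{-1/6}\,\delta^{1/3}\bigr)
\;=\;\mathcal{O}(\delta^{1/6}),
\]
which is strictly slower than $\mathcal{O}(\delta^{2/3})$ and therefore dominates. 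The paper itself remarks, immediately after Theorem~\ref{thm: convergenza rate ottimale}, that one needs $f(m(\delta))=\mathcal{O}(\delta^{10/3})$ to recover the Tikhonov rate --- your $\mathcal{O}(\delta^{4/3})$ falls well short of that. You correctly flagged ``ensuring that the rate on $f(m(\delta))$\ldots is strong enough for the second term in the max\ldots not to dominate'' as the delicate point, but then did not verify it; in fact, with your calibration it fails. Closing the argument requires either a sharper approximation rate for $z_\alpha$ (i.e.\ taking $\epsilon$ smaller, which in turn forces larger neuron counts and weight bounds than those prescribed in the corollary) or going back into the proof of the abstract theorem and re-estimating the compact-set contribution directly --- the latter being what the paper's phrase ``appropriate reordering of $\alpha(\delta)$ and $f(m(\delta))$'' presumably gestures toward.
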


The proof of this theorem follows directly from the proof of Theorem \ref{thm: rate di convergenza}, with an appropriate reordering of $\alpha(\delta)$ and $f(m(\delta))$.
With this corollary, we thus prove that, for suitable choices of the parameter, our method achieves the optimal convergence rates for solving the inverse problem.
Moreover, this theorem allows us to determine the minimal dimensions of the NN required to attain the optimal convergence rate.

\section{Numerical Experiments}
\label{sec:numexp}
This section is dedicated to a comparison between the standard Tikhonov regularization method and our proposed approach in the case of a synthetic image reconstruction problem.
To explore the regularization capabilities and limitations of the proposed method, we consider the image reconstruction problem of Computerized Tomography \cite{natterer2001mathematics} in which the forward operator is modelled by means of the Radon Transform with limited angles \cite{engl1996regularization}.
We perform numerical experiments to assess, in practice, the convergence properties of the proposed regularization method and to show the reliability of the reconstructions obtained by our approach.

\subsection{Problem Setup}
We consider the inverse problem \eqref{eq: F(x)=y}  where $\mathcal X= L^2(\Omega)$ with $\Omega = [-1,1]^2$, $\mathcal Y=L^2(Z)$ with $Z = [0,\pi)\times[-1,1]$, and $F$ is the Radon transform, and the ground truth object $x$ to reconstruct is the Shepp-Logan phantom \cite{shepp_logan}. 
In the numerical experiments, we consider a discretization of the domain $\Omega$ over $N = 128\times 128$ points $\{(\xi^1_i,\xi^2_j)\}_{i,j}$ with $i,j = 1,\ldots,128$.
The ground truth object $x$ becomes an array ${\bf x}$ with components  ${\bf x}_{ij} \coloneqq x(\xi^1_i,\xi^2_j)$.
We compute the (discrete) Radon transform of $\bf x$ for $50$ different angles between 0 and $\pi$ by means of the DeepInverse software library \cite{deepinv2023}.
The resulting data, called sinogram, is a vector $\bf y$, whose components ${\bf y}_{pq}$ represent the value of the Radon transform of $\bf x$ over a discrete set of $M = 182\times 50$ points $\{(s_p,\theta_q)\}_{pq}$ in $Z$.
Figure \ref{fig:shepp_logan} shows the ground truth image ${\bf x}$ and the corresponding sinogram ${\bf y}$.

In the numerical tests, we consider noisy perturbations ${\bf y}^\delta$ of $\bf y$, in which white Gaussian noise is added, i.e.,
\begin{equation}
{\bf y}^\delta \coloneqq {\bf y} + \delta {\bf n} ~,
\end{equation} 
where ${\bf n}$ is a standard normal random vector and $\delta > 0$.
To regularize the finite-dimensional inverse problem under consideration, we implement both the classical Tikhonov approach and our proposed methodology. 
In the former case, a regularized solution is obtained as
\begin{equation}\label{eq:ip_radon_tikhonov}
{\bf x}^\delta_{\alpha} \coloneqq \arg\min_{{\bf x} \in \mathbb{R}^N} \| \mathcal{R} {\bf x} - {\bf y}^\delta \|^2 + \alpha \| {\bf x} \|^2  = (\mathcal{R}^T\mathcal{R} + \alpha I)^{-1} \mathcal{R}^T {\bf y}^\delta ~, 
\end{equation}
where $\mathcal{R}$ represents the discretized Radon transform.
As for our proposed technique, we solve the optimization problem 
\begin{equation}\label{eq:ip_radon_NN}
{\bf x}^\delta_{\omega^\ast, \alpha} \coloneqq \arg\min_{{\bf x}_{\omega} \in C} \| \mathcal{R} {\bf x}_\omega - {\bf y}^\delta \|^2 + \alpha \| {\bf x}_\omega \|^2 ~,
\end{equation}
where $C \coloneqq \mathbb{R}^N_+$ is the non-negative orthant and the components of the vector ${\bf x}_\omega$ are obtained by evaluating a NN function parameterized by an array of weights $\omega$ over the grid of points in $\Omega$, viz.
$$
({\bf x}_{\omega})_{ij} \coloneqq {\mathcal N}(\xi^1_i,\xi^2_j; \omega) ~.
$$
for any $\alpha$. Specifically, problem \eqref{eq:ip_radon_NN} consists in finding the optimal set of weights $\omega^\ast$ of the NN given the data ${\bf y}^\delta$ and the grid of points for any $\alpha$.
In our experiments, $\mathcal N$ is a Multi-layer Perceptron (MLP; \cite{Goodfellow2016}) with $4$ hidden layers and $100$ neurons per layer.
The NN takes as input the two coordinates of the grid of points $(\xi^1_i,\xi^2_j)$ and provides as output an estimate of the intensity of the (unknown) object $x$ at that point.
To satisfy the constraint $C$, a ReLU activation function is applied in the final layer of  $\mathcal{N}$; the activation function adopted in the hidden layers is instead leaky ReLU \cite{maas2013rectifier}. 
The MLP implementation is performed by means of the pytorch library \cite{paszke2017automatic}, and problem \eqref{eq:ip_radon_NN} is solved for any $\alpha$ by means of the Adam optimizer \cite{kingma2017adammethodstochasticoptimization}.

\begin{figure}[t]
    \centering
    \includegraphics[width=0.4\textwidth]{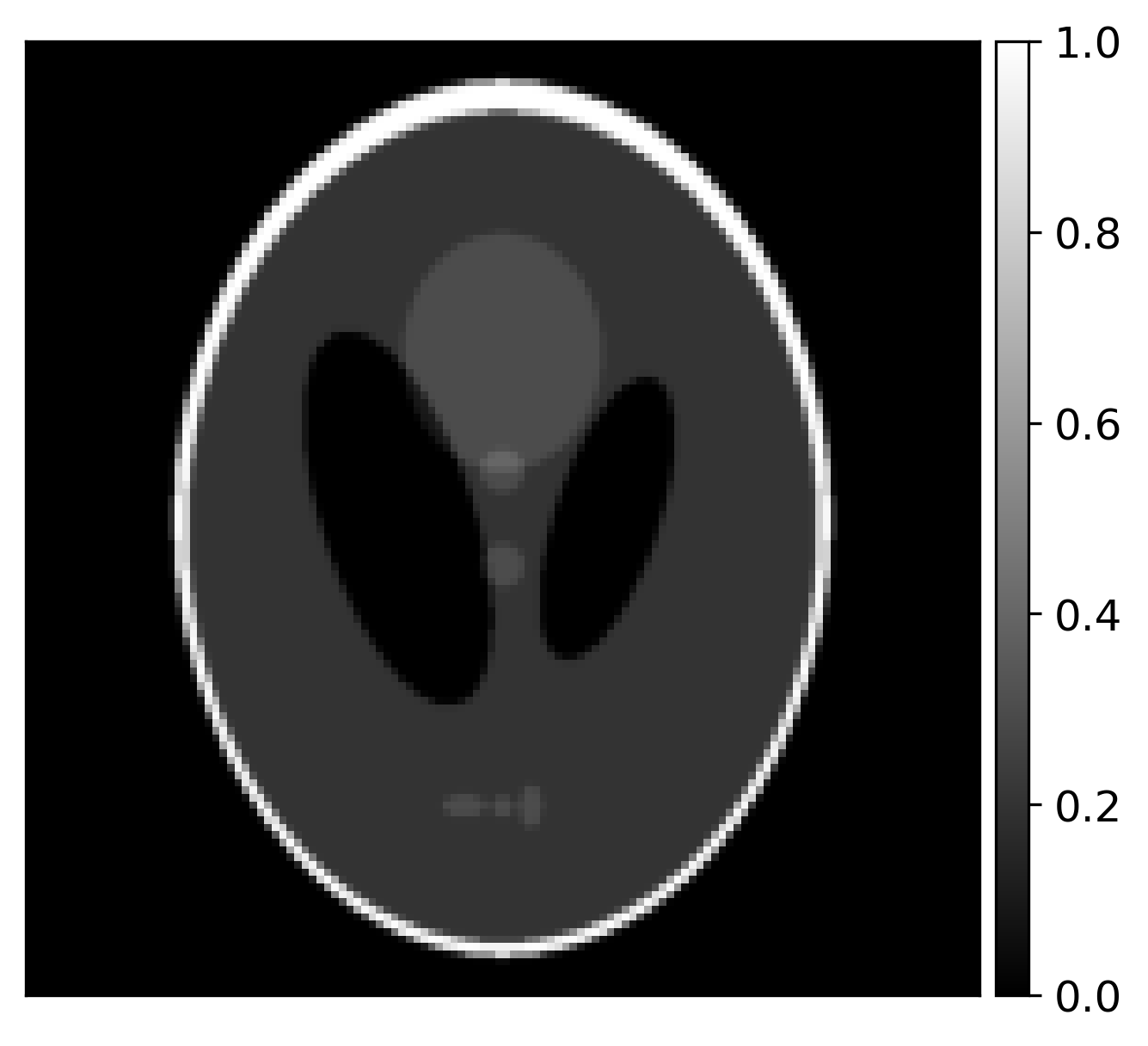}
    \hspace{10pt}
    \includegraphics[width=0.4\textwidth]{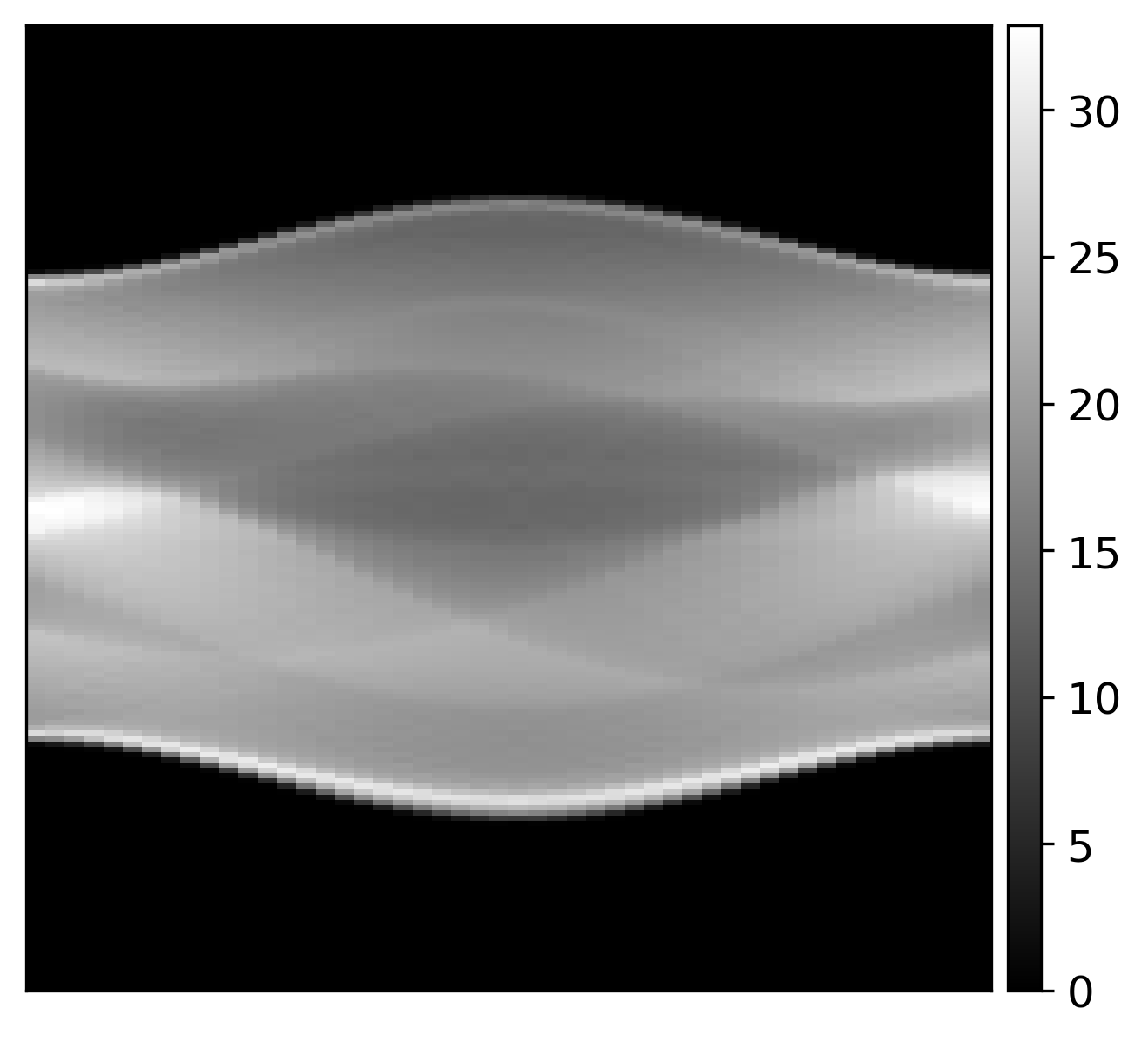}
    \caption{Shepp-Logan phantom and corresponding sinogram (left and right panel, respectively).}
    \label{fig:shepp_logan}
\end{figure}

\begin{figure}[t] 
    \centering
    \includegraphics[width=\textwidth]{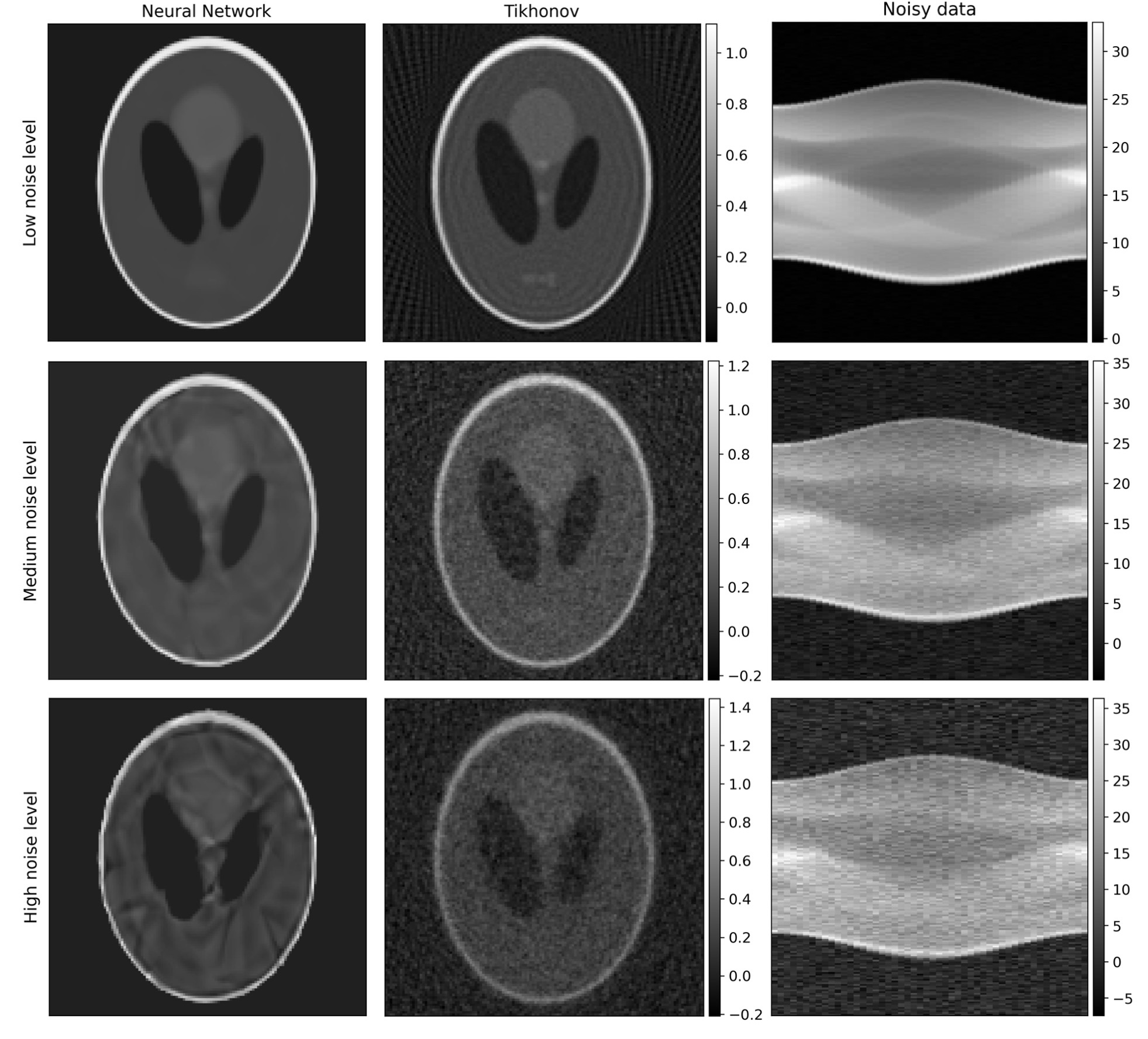}  
    \caption{Comparison of the NN reconstrction and the Tikhonov reconsturction for the first, 5 and the last value of noise. Low noise level, medium noise level, and high noise level correspond to SNR values of 42.60, 23.10, and 16.58, respectively.}\label{fig:example_reconstruction}
\end{figure}

We consider ten different values of $\delta$ in such a way that the resulting data Signal-to-Noise Ratio\footnote{The data SNR is defined as $20\log_{10}\left(\frac{\|{\bf y}\|}{\sqrt{M}\delta}\right)$ (see Chapter 3 of \cite{bertero1998introduction}).} (SNR) ranges between 16.6 dB and 42.6 dB.
For each noise level, we generate five random realizations of the data ${\bf y}^\delta$ and, for each noisy realization, we compute a set of regularized solutions $\{{\bf x}^\delta_{\alpha_i} \}_i$ and $\{{\bf x}^\delta_{\omega^\ast, \alpha_i}\}_i$ using the Tikhonov method and our proposed approach, respectively.
Then, we compute the reconstruction error with respect to the ground truth image using the $\ell^2$ norm, i.e., 
\begin{equation}
\mathrm{err}_{\rm Tik}(i, \delta, {\bf y}^\delta) = \| {\bf x} -  {\bf x}^\delta_{\alpha_i} \| ~,~ \mathrm{err}_{\rm NN}(i, \delta, {\bf y}^\delta) = \| {\bf x} -  {\bf x}^\delta_{\omega^\ast, \alpha_i} \| ~,
\end{equation}
and we select the reconstructions ${\bf x}^\delta_{\alpha_{i^\ast}}$ and ${\bf x}^\delta_{\omega^\ast, \alpha_{i^\ast}}$ that minimize such errors.
Figure \ref{fig:example_reconstruction} shows the optimal reconstructions provided by our proposed method and by the Tikhonov method for three different levels of data SRN.
With a slight amount of license, in the following we will denote by $\mathrm{err}_{\rm Tik}(\delta)$ and $\mathrm{err}_{\rm NN}(\delta)$ the average value over the five different noise realizations of the reconstruction errors corresponding to the optimal regularized solutions.

To compare the convergence properties of our NN-based regularization method with the Tikhonov ones, in Figure \ref{fig:error_vs_delta} we plot the quantities $\mathrm{err}_{\rm Tik}$ and $\mathrm{err}_{\rm NN}$ as a function of $\delta$.
The standard deviations of the reconstruction errors computed over the five different noise realizations are plotted as vertical error bars.
As a reference, we overlay a quantity proportional to $\delta^{2/3}$, which represents the optimal rate of convergence derived in section \ref{sec:neural_networks} and in \cite{engl1996regularization}.

\begin{figure}[h]
    \centering
    \includegraphics[width=0.9\textwidth]{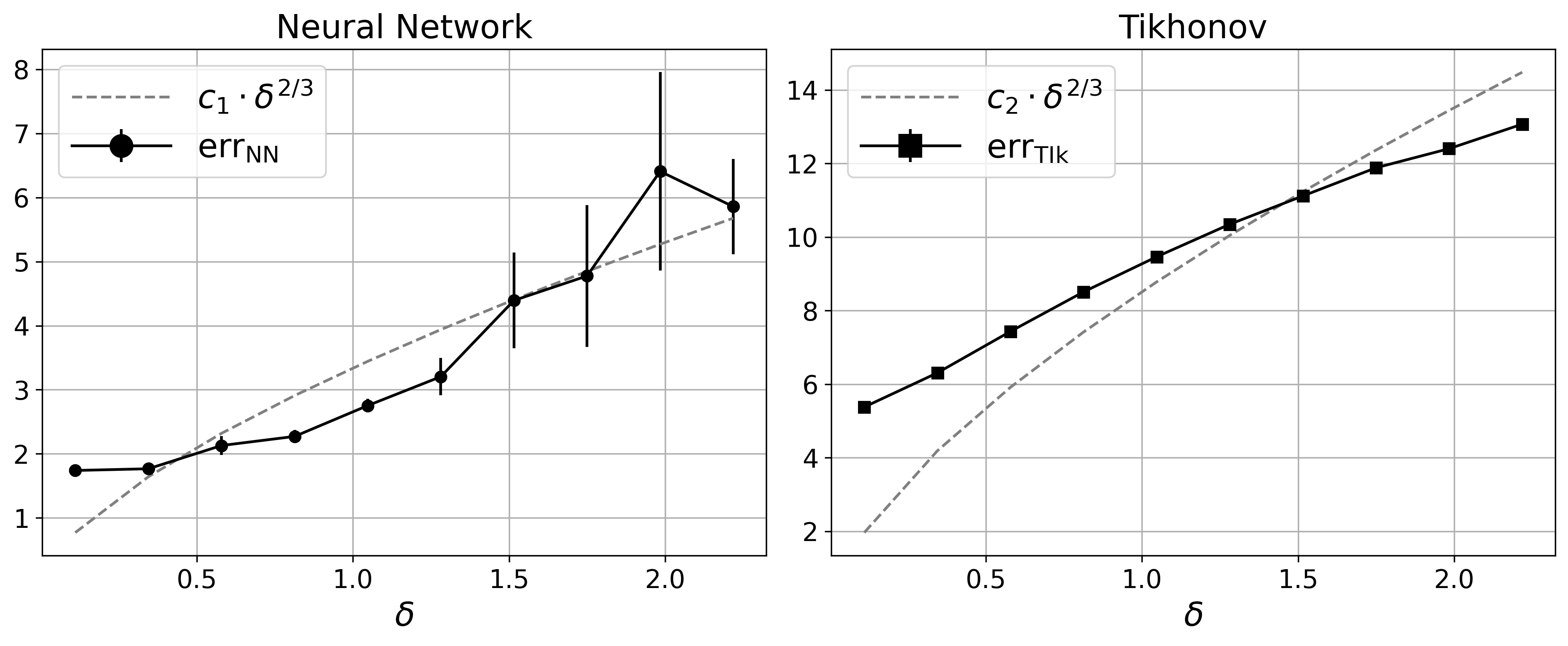}  
    \caption{Average reconstruction error over $5$ different noise realization for both our proposed method and the Tikhonov method, as a function of the noise level $\delta$ (left and right panel, respectively). The error bars represent the standard deviation across the same noise realizations. For comparison with optimal convergence rates, a quantity proportional to $\delta^{2/3}$ is plotted with a dashed line in both panels. The values $c_1$ and $c_2$ are determined by means the least square method.}
    \label{fig:error_vs_delta}
\end{figure}

\subsection{Analysis of the results}

Our proposed NN-based regularization technique outperforms the Tikhonov method, as demonstrated by the lower reconstruction errors achieved (see Figure \ref{fig:error_vs_delta}).
This is likely due to two main reasons.
First, the NN-based reconstructions have non-negative pixel values as they belong to the constraint set $C = \mathbb{R}^N_+$.
Second, the solutions obtained with our proposed method appear to better reproduce the piece-wise constant pixel values of the ground truth image, although no additional prior information is exploited by the NN-based method.
This is particularly visible in the reconstructions corresponding to the highest SNR value (top panels of Figure \ref{fig:example_reconstruction}), where the Tikhonov reconstruction shows evident ringing artifacts.
However, the NN-based regularization method provides an over-smoothed solution, in which the fine structures present in the bottom part of the Shepp-
Logan phantom are barely visible.
The ``smoothness'' property introduced by our proposed regularization method is also visible in the reconstructions corresponding to low and medium SNR values, especially in the parts of the image outside the phantom.
Finally, the NN-based technique reconstructs sharper edges compared to classic Tikhonov, as it is visible in areas of the ground truth object representing the skull.

Both our proposed regularization technique and the Tikhonov method show a convergence rate of the reconstruction error close to the optimal rate $\mathcal{O}(\delta^{2/3})$, as demonstrated in Figure \ref{fig:error_vs_delta}.
The discrepancy between the error rates derived in the numerical experiments and the theoretical ones is likely caused by the fact that theoretical results are demonstrated in an infinite-dimensional setting, whereas numerical experiments are performed upon discretization of infinite-dimensional problem.
Furthermore, the classical Tikhonov method, which relies on a closed-form solution, appears to be stable with respect to the presence of stochastic noise in the data, as demonstrated by the almost-negligible error bars plotted in the right panel of Figure \ref{fig:error_vs_delta}.
Differently, our proposed regularization method consists of an NN optimization, which is known to be a numerically unstable process, and is therefore sensitive to the specific noise realization affecting the data, especially for large noise levels (see the left panel of Figure \ref{fig:error_vs_delta}).

\section{Conclusions}
\label{sec:conclusion}
In this paper we presented the use of a NN to approximate the unknown object of an inverse problem inspired by the success of the neural radiance field strategy for natural image reconstruction \cite{mildenhall2021nerf}. 
First, we studied its asymptotic approximation properties when the weights of the network are regularized by the classical Tikhonov regularization method.
Then, we observed by numerical computation its non-asymptotic properties in a set of controlled experiments, where the proposed method provided promising results especially in the presence of higher amount of noise, by showing edge-preserving properties without having introduced an explicit constraint in this regard.
Although this aspect of the numerical results is encouraging, a deeper investigation assessing the performance of the method on objects with different morphologies has to be done in the next future.

\section*{Acknowledgments}
The authors thank the National Group of Scientific Computing (GNCS-INDAM) that supported this research.
PM is supported by the Swiss National Science Foundation in the framework of the project Robust Density Models for High Energy Physics and Solar Physics (rodem.ch), CRSII5\_193716.

\bibliographystyle{siamplain}

\end{document}

% --- supplement: ex_supplement.tex ---

\maketitle

\section{A detailed example}

Here we include some equations and theorem-like environments to show
how these are labeled in a supplement and can be referenced from the
main text.
Consider the following equation:
\begin{equation}
  \label{eq:suppa}
  a^2 + b^2 = c^2.
\end{equation}
You can also reference equations such as \cref{eq:matrices,eq:bb} 
from the main article in this supplement.

\lipsum[100-101]

\begin{theorem}
  An example theorem.
\end{theorem}

\lipsum[102]
 
\begin{lemma}
  An example lemma.
\end{lemma}

\lipsum[103-105]

Here is an example citation: \cite{KoMa14}.

\section[Proof of Thm]{Proof of \cref{thm:bigthm}}
\label{sec:proof}
\lipsum[106-112]

\section{Additional experimental results}
\Cref{tab:foo} shows additional
supporting evidence. 

\begin{table}[htbp]
{\footnotesize
  \caption{Example table}  \label{tab:foo}
\begin{center}
  \begin{tabular}{|c|c|c|} \hline
   Species & \bf Mean & \bf Std.~Dev. \\ \hline
    1 & 3.4 & 1.2 \\
    2 & 5.4 & 0.6 \\ \hline
  \end{tabular}
\end{center}
}
\end{table}

\bibliographystyle{siamplain}
\bibliography{references}